\documentclass[preprint]{elsarticle}
\usepackage{graphicx} 
\usepackage[scale=0.75]{geometry}

\journal{}
\bibliographystyle{elsarticle-num}
\usepackage{stmaryrd}
\usepackage{appendix}
\usepackage{lipsum}

\usepackage{amsfonts}
\usepackage{graphicx}
\usepackage{epstopdf}
\usepackage{algorithmic}
\usepackage[algo2e, algoruled]{algorithm2e}
\usepackage{tikz}
\usepackage{amsmath}
\usepackage{amssymb}
\usepackage{amsthm}
\usepackage{xcolor}
\usepackage[colorlinks, linkcolor=MidnightBlue, citecolor=green!50!black]{hyperref}
\usepackage[capitalize]{cleveref}

\usetikzlibrary{matrix,arrows.meta}

\ifpdf
  \DeclareGraphicsExtensions{.eps,.pdf,.png,.jpg}
\else
  \DeclareGraphicsExtensions{.eps}
\fi

\usepackage{enumitem}
\setlist[enumerate]{leftmargin=.5in}
\setlist[itemize]{leftmargin=.5in}



\crefname{hypothesis}{Hypothesis}{Hypotheses}



\usepackage{amsopn}

\usetikzlibrary{shapes,arrows}

\ifpdf
  \DeclareGraphicsExtensions{.eps,.pdf,.png,.jpg}
\else
  \DeclareGraphicsExtensions{.eps}
\fi

\usepackage{enumitem}
\setlist[enumerate]{leftmargin=.5in}
\setlist[itemize]{leftmargin=.5in}

\foreach \x in {a,...,z}{
    \expandafter\xdef\csname bf\x \endcsname{\noexpand\ensuremath{\noexpand\mathbf{\x}}}
    \expandafter\xdef\csname bm\x \endcsname{\noexpand\ensuremath{\noexpand\boldsymbol{\x}}}
  }
\foreach \x in {A,...,Z}{
    \expandafter\xdef\csname bs\x \endcsname{\noexpand\ensuremath{\noexpand\boldsymbol{\x}}}
    \expandafter\xdef\csname bf\x \endcsname{\noexpand\ensuremath{\noexpand\mathbf{\x}}}
    \expandafter\xdef\csname bb\x \endcsname{\noexpand\ensuremath{\noexpand\mathbb{\x}}}
    \expandafter\xdef\csname ds\x \endcsname{\noexpand\ensuremath{\noexpand\mathds{\x}}}
    \expandafter\xdef\csname cal\x \endcsname{\noexpand\ensuremath{\noexpand\mathcal{\x}}}
  }

\def\transp{\top}



\newcommand{\conj}[1]{\overline{#1}}
\renewcommand{\bmj}{\jmath}

\usepackage{amsthm}
\newtheorem{definition}[]{Definition}
\newtheorem{theorem}[definition]{Theorem}
\newtheorem{remark}[definition]{Remark}
\newtheorem{corollary}[definition]{Corollary}
\newtheorem{proposition}[definition]{Proposition}
\newtheorem{example}[definition]{Example}
\newtheorem{lemma}[definition]{Lemma}


\def\multmat#1#2{\mathbf{M}_{#2}(\mathbf{#1})}
\def\pco#1#2{#1[{#2}]}


\newcommand{\ie}{i.e.,~}

\usepackage{todonotes}

\begin{document}

\begin{frontmatter}



\title{On factorization of rank-one auto-correlation matrix polynomials}


\author[cranaddress]{Konstantin Usevich}
\ead{konstantin.usevich@cnrs.fr}
\author[cranaddress]{Julien Flamant}
\ead{julien.flamant@cnrs.fr}
\author[iecladdress]{Marianne Clausel}
\ead{marianne.clausel@univ-lorraine.fr}
\author[cranaddress]{David Brie}
\ead{david.brie@univ-lorraine.fr}
\address[cranaddress]{Université de Lorraine, CNRS, CRAN, F-54000 Nancy France}
\address[iecladdress]{Université de Lorraine, CNRS, Institut Elie Cartan de Lorraine, F-54000 Nancy France}

\begin{abstract}
This article characterizes the rank-one factorization of auto-correlation matrix polynomials. 
We establish a sufficient and necessary uniqueness condition for uniqueness of the factorization based on the greatest common divisor (GCD) of multiple polynomials. 
In the unique case, we show that the factorization can be carried out explicitly using GCDs. 
In the non-unique case, the number of non-trivially different factorizations is given and all solutions are enumerated.
\end{abstract}


\begin{keyword}
matrix polynomial\sep auto-correlation \sep rank-one factorization \sep uniqueness \sep greatest common divisors
\MSC{15A21,
15A23,
42A85,
47A68,
47N70}
\end{keyword}
\end{frontmatter}
\section{Introduction}
Let $\boldsymbol{\Gamma}(z) = [\Gamma_{ij}(z)]_{i,j=1}^{K,K}$ be an $K \times K$ matrix polynomial of degree at most $2(N-1)$, with complex coefficients.
The goal of this paper is to characterize the matrix polynomials that admit the following rank-one factorization:
\begin{equation}\label{prob:PAF}
\boldsymbol{\Gamma}(z) = \begin{bmatrix}
        \Gamma_{11}(z) & \cdots &\Gamma_{1K}(z)\\
        \vdots & & \vdots\\
        \Gamma_{K1}(z) & \cdots &\Gamma_{KK}(z)\\
    \end{bmatrix} =
\begin{bmatrix}
X_1(z) \\
\vdots \\
X_K(z) 
\end{bmatrix}
\begin{bmatrix}
\widetilde{X}_1(z) &
\cdots &
\widetilde{X}_K(z) 
\end{bmatrix},
\end{equation}
where $X_k(z), k \in \{1,\ldots, K\}$ are  polynomials with degree at most $N-1$;
in \eqref{prob:PAF}, the notation  $\widetilde{Y}(z)$ denotes the complex conjugate reversal
\begin{equation}\label{eq:conjReverse}
\widetilde{Y}(z) := z^{N-1} \conj{{Y}(\conj{z}^{-1})}  =  \sum_{n=0}^{N-1} \conj{y[N-1-n]}z^n
\end{equation}
of  a polynomial ${Y}(z) = \sum_{n=0}^{N-1} \conj{y[n]}z^n$ of degree at most $N-1$.
In this paper we address the following questions:
\begin{itemize}
\item when is the factorization \eqref{prob:PAF} unique?
\item if it is not unique, how to find all possible factorizations \eqref{prob:PAF}?
\end{itemize}
The polynomial factorization problem arises in several applications in signal processing, such as phase retrieval problems \cite{flamant2023polarimetric} and blind  system identification \cite{jaganathan_reconstruction_2019}. In such applications, one is interested to reconstruct a number of signals (vectors)
$\boldsymbol{x}_k = \begin{bmatrix} x_k[0] & \cdots & x_k[N-1] \end{bmatrix}^{\transp} \in \bbC^{N}, k \in \{1,\ldots,K\}$, given a set of their correlations sequences $\left(\gamma_{ij}[n]\right)_{i,j =1, n=-N+1}^{K,K,N-1}$ defined as follows:
\begin{equation}\label{eq:autocor}
\gamma_{ij}[n] := \sum_{m=0}^{N-1-n} x_i[m+n]\conj{x_j[m]}.
\end{equation} 
Such correlation functions encode the $K\times K$ auto-correlation matrix sequence of the $K$-dimensional vector signal $([x_1[n]\:\ldots \: x_K[n]]^\transp)_{n=0}^{N-1})$.
Moreover it can be shown (see Appendix \ref{app:autocor}) that the elements  in \eqref{eq:autocor} corresponds exactly to the coefficients of the polynomial $\Gamma_{ij}(z)$ factorized as \eqref{prob:PAF}.
Thus the problem of recovery of the vectors $\boldsymbol{x}_k$ from correlations sequences is equivalent to  the problem of factorizing\footnote{This is the reason why in \cite{flamant2023polarimetric} we refer to \eqref{prob:PAF} as the polynomial auto-correlation factorization (PAF) problem.} a given matrix polynomial as \eqref{prob:PAF}.

In this paper we provide a complete characterization of all possible factorizations rank-one matrix polynomial \eqref{prob:PAF}; in fact, these factorizations are entirely characterized by the greatest common divisor of all the matrix elements $\Gamma_{ij}(z)$, denoted as $\gcd\lbrace\Gamma_{ij}\rbrace_{i,j=1}^{K,K}$.
In particular, we prove the following theorem.

\begin{theorem}\label{eq:thm_main}
The factorization \eqref{prob:PAF} is unique up to global scaling if and only if the greatest common divisor  $H(z) =  \gcd\lbrace\Gamma_{ij}\rbrace_{i,j=1}^{K,K}$ is $1$ or it roots lie only on  the complex unit circle $\bbT := \lbrace z \in \bbC \mid \vert z \vert =1\rbrace$.
\end{theorem}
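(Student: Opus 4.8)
The plan is to reduce the uniqueness question to understanding the transformations that map one factorization \eqref{prob:PAF} to another, and then to characterize these transformations through the common factor $G(z) := \gcd\{X_k\}_{k=1}^{K}$ of the factor polynomials. The first step is an elementary but crucial identity relating $H$ to $G$. Writing $\Gamma_{ij} = X_i \widetilde{X}_j$, a per-irreducible-factor multiplicity computation gives, for every irreducible $p$, that $\min_{i,j} v_p(X_i\widetilde{X}_j) = \min_i v_p(X_i) + \min_j v_p(\widetilde{X}_j)$, whence $H = \gcd\{\Gamma_{ij}\} = G\,\widetilde{G}$ up to a unit, using that the conjugate-reversal carries $\gcd\{X_k\}$ to $\gcd\{\widetilde{X}_k\}$. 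Since the reversal sends a root $\alpha$ of $G$ to the root $1/\bar\alpha$ of $\widetilde{G}$, the roots of $H$ are exactly the roots of $G$ together with their conjugate-reciprocals. Consequently, $H$ is $1$ or has all roots on $\bbT$ if and only if $G$ is constant or has all roots on $\bbT$, i.e.\ $G$ has no root off $\bbT$. This recasts the theorem as: the factorization is unique up to a unimodular constant iff $G$ has no root off $\bbT$.

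Next I would parametrize all factorizations. Given a second factorization with factors $Y_k$, the rank-one structure forces $\Gamma_{i1}/\Gamma_{11} = X_i/X_1 = Y_i/Y_1$ as rational functions (after restricting to the nonzero components), so $\mathbf{Y} = \lambda\,\mathbf{X}$ for a single rational $\lambda = Y_1/X_1$. Imposing $Y_i\widetilde{Y}_j = \Gamma_{ij}$ and using $\widetilde{\lambda X_j} = \widetilde{\lambda}\,\widetilde{X}_j$ with $\widetilde{\lambda}(z) := \conj{\lambda(\conj{z}^{-1})}$ yields the all-pass condition $\lambda\,\widetilde{\lambda} = 1$; requiring each $Y_j = \lambda X_j$ to be a polynomial of degree at most $N-1$ forces the denominator of $\lambda$ in lowest terms to divide every $X_j$, hence to divide $G$. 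Writing $\lambda = B/A$ coprime, the relation $\lambda\widetilde{\lambda}=1$ gives $A\widetilde{A} = B\widetilde{B}$, and coprimality forces the roots of $A$ to be the conjugate-reciprocals of the roots of $B$. Thus $\lambda$ is a Blaschke-type factor whose poles, necessarily roots of $G$, are flipped to their conjugate-reciprocals: every factorization is obtained from a fixed one by choosing a multiset of roots $\gamma$ of $G$ and replacing each chosen $\gamma$ by $1/\bar\gamma$.

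Finally I would close both implications. If $G$ has no off-circle root, then every root $\gamma$ satisfies $1/\bar\gamma = \gamma$, so each admissible $\lambda$ collapses to a unimodular constant (indeed $\tfrac{z-\gamma}{1-\bar\gamma z} = -\gamma$ when $|\gamma|=1$), and the factorization is unique up to global scaling, giving the ``$\Leftarrow$'' direction. Conversely, if $G$ has a root $\gamma$ with $|\gamma|\neq 1$ — equivalently $H$ has a root off $\bbT$ — then flipping $\gamma$ to $1/\bar\gamma$ produces a non-constant $\lambda$, hence a factorization not proportional to the original, so uniqueness fails; this gives ``$\Rightarrow$'' by contraposition.

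The main obstacle I anticipate is the middle step: proving rigorously that the only admissible $\lambda$ are these Blaschke flips. This requires careful handling of the conjugate-reversal of \emph{rational} functions and of the degree bookkeeping, in particular the degenerate cases of common roots at $z=0$ (where reversal sends a root to infinity, i.e.\ a genuine degree drop within the ``at most $N-1$'' slack) and of vanishing components, which must be isolated so that the ratio argument and the divisibility $A \mid G$ remain valid.
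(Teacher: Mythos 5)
Your overall architecture matches the paper's: you first establish $H=G\widetilde G$ (the paper's \Cref{lemma:PAFspectralFactorisation}), then reduce uniqueness of \eqref{prob:PAF} to uniqueness of the scalar factorization $H=S\widetilde S$, and finally settle both directions by the conjugate-reciprocal ``flip'' of off-circle roots (the paper's \Cref{rem:roots_H} and the two implications in its Theorem~\ref{corr:uniquenessPAF}). Where you genuinely diverge is the middle step. The paper proves that every alternative factorization has the form $Y_k=S R_k$ by first treating the coprime case constructively via row GCDs (\Cref{prop:coprime}) and then dividing $\boldsymbol\Gamma$ by $H$ to reduce to it (\Cref{lem:alternativeFactorizations}). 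You instead observe that the rank-one structure forces $Y_i/Y_1=\Gamma_{i1}/\Gamma_{11}=X_i/X_1$, so $Y_k=\lambda X_k$ for a single rational $\lambda$, and then extract the all-pass condition $\lambda\widetilde\lambda=1$ and the divisibility of the denominator of $\lambda$ into $G$. This is a legitimate and arguably more direct route to the same parametrization; what it gives up is the explicit GCD-based reconstruction formula that the paper's detour through the coprime case provides, and it requires you to handle identically vanishing components separately (which is routine, since $X_k\equiv 0$ iff $\Gamma_{kk}\equiv 0$ iff $Y_k\equiv 0$).

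The one loose end you flag but do not close is real, and it is exactly the point where the paper's roots-at-infinity formalism does work for you. Your constraint ``the denominator of $\lambda$ in lowest terms divides $G$'' together with $A=c\widetilde B$ does not exclude \emph{polynomial} all-pass factors such as $\lambda(z)=cz$ (denominator $1$). Such a $\lambda$ is admissible precisely when every $x_k[N-1]=0$, i.e.\ when the $X_k$ share a root at $\infty$ in the sense of the ambient space $\bbC_{\le N-1}[z]$; dually, $\lambda=c/z$ is admissible when all $X_k(0)=0$. These are genuine non-proportional factorizations (flipping the root pair $(0,\infty)$ of $H$), so omitting them would break the completeness of your classification of admissible $\lambda$, which the ``$\Leftarrow$'' direction relies on. To close the gap you must either adopt the paper's convention (count $N-1-\deg X_k$ as the multiplicity of the root $\infty$, so that these $\lambda$'s again become Blaschke flips of roots of $G$), or argue directly that the hypothesis of the ``$\Leftarrow$'' direction already excludes them: if all roots of $H$ lie on $\bbT$ then $H(0)\neq 0$, and since $\Gamma_{ij}(0)=X_i(0)\,\overline{x_j[N-1]}$, this forbids both degenerate configurations, after which the degree bookkeeping forces $\deg B=\deg A$ and your argument goes through. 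With that addendum the proof is correct.
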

By uniqueness up to global scaling in \eqref{eq:thm_main} we mean that any alternative factorization $\Gamma_{ij} (z) = Y_i(z) \widetilde{Y}_j (z)$ satisfies $Y_i(z) = c X_i(z)$ with some $c \in \bbT$.
Moreover, in the non-unique case, we provide all possible factorizations modulo the global scaling, which again depend on roots of the polynomial $H(z)$ and their multiplicities.

\emph{Related work.}
Factorizations of matrix polynomials and matrix functions are a classic topic in linear algebra and operator theory \cite{gohberg2009matrix}.
In fact, it can be shown that the matrix polynomials factorized as \eqref{prob:PAF} have the so-called $*$-palindromic structure \cite{mackey2011smith}.
Several previous works have addressed the spectral properties or the Smith normal form of palindromic matrix polynomials, but, up to the authors knowledge, none of them discussed in detail uniqueness and characterization of solutions, which is a very important question in applications mentioned above.
The factorization \eqref{prob:PAF} also resembles the problem of spectral factorization of matrix functions \cite{bini2003effective}, however,  unlike the latter problem, in the factorization \eqref{prob:PAF} there is no restriction on location of roots of the polynomials inside the unit disk.
Very few papers on low-rank factorization treat the low-rank case \cite{ephremidze2015rank}.
It is shown that \cite[Theorem 1]{ephremidze2015rank} that any rank-one matrix polynomial that is positive-semidefinite on the complex unit circle $\bbT$ admits a factorization \eqref{prob:PAF} that is unique if one imposes additional constraints (minimum phase requirement).
The question of characterizing the set  of non-minimal-phase spectral factorizations was  only analyzed \cite{ephremidze2017non} for the full rank case; also, only the generic case (determinant with only simple roots) is treated.

The uniqueness of the factorization \eqref{prob:PAF} is also closely related to uniqueness of the solution in (algebraic) phase retrieval problems (see \cite{flamant2023polarimetric}).
For the latter problem, the case $K=1$ (corresponding to $1\times 1$ matrices) was fully characterized in  \cite{beinert2015ambiguities}. 
In the case $K>1$, only partial results are available in \cite{jaganathan_reconstruction_2019}, while our paper provides a complete characterization in the language of matrix polynomial factorizations.

\emph{Caveat.}
In this paper, we employ the formalism of univariate polynomials with roots at infinity, that is crucial for the algebraic theory  of Hankel matrices \cite[\S I.0]{heinig1984algebraic} and matrix methods for approximate greatest common divisor computations \cite{usevich_variable_2017}.
This formalism simplifies the proofs and allows for a transparent and complete characterization for the polynomial factorization problem.
However,  the notion of greatest common divisor is slightly different from the standard one, as it takes into account the possible roots at infinity.

\emph{Organization of the paper.}
The main notation and main facts regarding polynomials with roots at infinity is surveyed in \Cref{sec:background}.
In \Cref{sub:uniquenessGeneral}, we provide the main result on uniqueness of the factorization, which is a slight generalization of \Cref{eq:thm_main}.
Finally, in \Cref{sec:nonuniqueness} we discuss the complete description of the set of solutions.

\section{Background: polynomials with roots at infinity}\label{sec:background}
The goal of this subsection is to introduce the formalism of the polynomials with the roots at infinity used later in the paper.
As mentioned in \cite{usevich_variable_2017}, such spaces of polynomials can be  identified with the space of homogeneous bivariate polynomials with roots in a projective space (an approach commonly used in algebraic geometry \cite[Ch. 8]{cox_ideals_1997}).
In this paper, however, for simplicity we prefer to work with univariate polynomials in $\bbC_{\le D}[z]$ instead.

\subsection{Vector spaces of polynomials of bounded degree}
Let $\bbC$ denote the complex field, $\bbT = \{  z \in \bbC \, | \, |z| = 1 \}$  denote the unit circle, and let $\bbC_{\le D}[z]$ denote the space of univariate polynomials with complex coefficients of degree at most $D$.
Any polynomial $A \in \bbC_{\le D}[z]$ is in one-to-one correspondence with its vector of coefficients:
\begin{equation}\label{eq:poly}
A(z) = \sum\limits_{n=0}^{D} a[n] z^n \quad  \leftrightarrow \quad \bfa =
  \begin{bmatrix}
    a[0] & a[1] & \cdots & a[D]
  \end{bmatrix}^{\transp};
\end{equation}
thus $\bbC_{\le D}[z]$ is a $(D+1)$-dimensional  vector space that is isomorphic to  $\bbC^{D+1}$.
In what follows, we are going to use the notation $\bfa$, $A(z)$, $a[n]$ for the vectors, polynomials and coefficients of polynomials/vectors (with some abuse of meaning for the brackets). 

The conjugate reversal $\widetilde{A}(z)$ of the polynomial in \eqref{eq:poly} is given by \eqref{eq:conjReverse} and corresponds to the complex-conjugated and reversed vector of coefficients 
\[
\widetilde{\bfa}  = \begin{bmatrix} \conj{a[D]}&  \cdots & \conj{a[1]} & \conj{a[0]} \end{bmatrix}^{\top}.
\]
Finally, we define the multiplication of polynomials, as usual,  but viewing it as a map between the finite-dimensional vector spaces
\begin{equation}
\begin{split}
\bbC_{\le D_1}[z] \times \bbC_{\le D_2}[z] &\to \bbC_{\le (D_{1}+D_{2})}[z] \\
  (A(z), B(z)) & \mapsto C(z) = A(z)B(z).
\end{split}\label{def:mult}
\end{equation}
\begin{remark}
The multiplication of polynomials defined in \eqref{def:mult} (via the isomorphism \eqref{eq:poly})  becomes a bilinear mapping $\bbC^{D_1+1} \times   \bbC^{D_2+1} \to \bbC^{D_1+D_2+1}$, defined as
\begin{equation}\label{eq:polyMultMatrixProduct}
  (\bfa, \bfb)  \mapsto  \bfc =  \multmat{b}{D_1} \bfa=  \multmat{a}{D_2} \bfb,
\end{equation}
where $\multmat{a}{L}$ is the multiplication matrix 
\begin{equation}\label{eq:multiplicationMatrix}
  \multmat{a}{L} :=
  {\begin{bmatrix}
      \pco{a}{0} &        &            \\
      \vdots     & \ddots &            \\
      \pco{a}{D} &        & \pco{a}{0} \\
                 & \ddots & \vdots     \\
                 &        & \pco{a}{D}
    \end{bmatrix}} \in \mathbb{C}^{(D+ L +1) \times (L+1)},
\end{equation}
defined for any non-negative integer $L$ and a 
vector of coefficients $\bfa$ in \eqref{eq:poly}. 
Therefore, the multiplication of polynomials corresponds essentially to the convolution of vectors.
\end{remark}

We will also remark how the usual inner product on $\bbC^{N}$ can computed using multiplications of polynomials.
\begin{lemma}\label{lem:inner_product}
Let $X, Y \in \bbC_{\le N-1}[z]$.
The coefficient of the polynomial $X(z) \widetilde{Y}(z)$ at the monomial $z^{N-1}$ is equal to the inner product between the vectors of coefficients, that is:
\[
\left.(X(z) \widetilde{Y}(z))\right|_{z^{N-1}} = \langle X, Y \rangle = \sum\limits_{n=0}^{N-1} x[n] \conj{y[n]} = \langle \bfx, \bfy\rangle_{\bbC^N}.
\]
In particular, $\left.(X(z) \widetilde{X}(z))\right|_{z^{N-1}} = \langle X, X \rangle = \|X\|^2_2 = \Vert \bfx \Vert_2^2$.
\end{lemma}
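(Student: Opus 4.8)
The plan is to compute the relevant coefficient directly from the definitions, exploiting the fact that polynomial multiplication is nothing but convolution of coefficient vectors (see \eqref{eq:polyMultMatrixProduct}). First I would write $X(z) = \sum_{n=0}^{N-1} x[n]\, z^{n}$ and, applying the definition \eqref{eq:conjReverse} of the conjugate reversal with $D = N-1$, record that
\[
\widetilde{Y}(z) = \sum_{m=0}^{N-1} \conj{y[N-1-m]}\, z^{m},
\]
so that the coefficient of $\widetilde{Y}$ at the monomial $z^{m}$ is exactly $\conj{y[N-1-m]}$.

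Next I would extract the coefficient of the product $X(z)\widetilde{Y}(z)$ at $z^{N-1}$. By the Cauchy product rule, this coefficient equals the sum, over all pairs $(n,m)$ with $n+m = N-1$, of the $z^{n}$-coefficient of $X$ times the $z^{m}$-coefficient of $\widetilde{Y}$, i.e. $\sum_{n+m=N-1} x[n]\,\conj{y[N-1-m]}$. Substituting $m = N-1-n$ (so that $N-1-m = n$) collapses this to the single sum $\sum_{n=0}^{N-1} x[n]\,\conj{y[n]}$, which is precisely $\langle \bfx, \bfy\rangle_{\bbC^{N}}$. The final assertion then follows immediately by setting $Y = X$, yielding $\sum_{n=0}^{N-1} |x[n]|^{2} = \|X\|_{2}^{2}$.

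There is no genuine obstacle here: the entire content is the observation that the conjugate reversal exactly reverses and conjugates the coefficient vector, so that the central coefficient (at $z^{N-1}$) of the degree-$2(N-1)$ product pairs each $x[n]$ with the matching $\conj{y[n]}$. The only point requiring any care is the index bookkeeping in \eqref{eq:conjReverse} together with the Cauchy product; equivalently, one may simply read the claim off as the central entry of the convolution $\mathbf{M}_{N-1}(\widetilde{\bfy})\,\bfx$ arising from \eqref{eq:polyMultMatrixProduct}.
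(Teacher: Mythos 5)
Your computation is correct and is exactly the ``straightforward calculation'' the paper invokes: the paper's proof defers to the convolution identity worked out in Appendix~\ref{app:autocor}, where the coefficient of $X_i(z)\widetilde{X}_j(z)$ at $z^{N-1}$ is identified with $\gamma_{ij}[0]=\sum_m x_i[m]\conj{x_j[m]}$. Your index bookkeeping via the Cauchy product (and the remark about reading it off as the central entry of $\mathbf{M}_{N-1}(\widetilde{\bfy})\,\bfx$) is the same argument made explicit, so there is nothing to add.
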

\begin{proof}
It follows from straightforward calculation (see also Appendix \ref{app:autocor}, where the relation between \eqref{prob:PAF} and correlation sequences is detailed).
\end{proof}

\subsection{Divisors and greatest common divisors}
In this paper, we work with the multiplication defined in \Cref{def:mult}: as a result, we always take care of the space where polynomials belongs to\footnote{Again, we could work instead homogeneous bivariate polynomials, but we stick to the notation with univariate polynomials used in this paper.}.
Therefore, we need a special definition of divisibility.
\begin{definition}[Divisors of polynomials]\label{def:divi}
We say that the polynomial $C \in\bbC_{\le N}[z]$ has a divisor $B \in  \bbC_{\le D}[z] \setminus \{ \boldsymbol{0}\}$ if there is a polynomial $C \in \bbC_{\le N-D}[z]$ such that $C(z) = A(z)B(z)$ in the sense of \Cref{def:mult}.
\end{definition}
\begin{example}\label{ex:simple_poly_divisibility}
Consider the following polynomial from  $\bbC_{\le 5}[z]$:
\begin{equation}
\label{eq:ex_poly}
A(z) = 0 \cdot z^5 + 0 \cdot z^4 + \frac{1}{2} z^3 + \frac{1}{2} z^2 - z  \in \bbC_{\le 5}[z],
\end{equation}
which has two zero leading coefficients.
Then the polynomial  $F(z) = 0 \cdot z^3 + 0 \cdot z^2 + z + 2  \in \bbC_{\le 3}[z]$ is a divisor of the polynomial $A(z)$ in \eqref{eq:ex_poly},
because $A(z) = F(z)\cdot (\frac{1}{2}(z-1)z)$ according to definition \Cref{def:mult}.
However, the polynomial $G(z) = 0 \cdot z^4 + 0 \cdot z^3 + 0 \cdot z^2 + z + 2  \in \bbC_{\le 4}[z]$ is not a divisor of $A(z)$, because there is no $H(z) \in \bbC_{\le 1}[z]$ such that $A(z) = G(z)H(z)$.
Note that $F(z)$ and $G(z)$ represent the same polynomial $z+2$ in $\bbC[z]$ (if we forget the space in which the polynomial is living).
\end{example}
\begin{remark}[Alternative definition of the divisibility]\label{rem:divisibility_alternative}
\Cref{ex:simple_poly_divisibility} suggests the following equivalent definition of divisibility: 
the polynomial $C \in\bbC_{\le N}[z]$ has a divisor $B \in  \bbC_{\le D}[z] \setminus \{ \boldsymbol{0}\}$  if $B$ is a divisor of $C$ in the usual sense, and, in addition $B$ has at most the same number of zero leading coefficients as $C$.

For instance, in \Cref{ex:simple_poly_divisibility} the polynomial $z+2$ is a divisor of $A(z) = \frac{1}{2} z^3 + \frac{1}{2} z^2 - z  \in \bbC_{\le 5}[z]$, but $A(z)$ has two zero leading coefficients. The polynomials $F(z)$ and $G(z)$, in their turn, have $2$ and $3$ zero leading coefficients, respectively.
\end{remark}

Now we are ready to introduce the notion of the greatest common divisor of polynomials.
\begin{definition}\label{def:gcd}
For a set of polynomials, $X_1 \in \bbC_{\le N_1}[z], \ldots, X_K \in \bbC_{\le N_K}[z]$, the greatest common divisor $H$ is defined as the polynomial $H \in \bbC_{\le D}[z]$ with highest possible $D$, such that $H(z)$ is a divisor of all polynomials $X_1,\ldots,X_K$  in the sense of \Cref{def:divi}. 
We set $\gcd\{A,0\} = A$, and, if all polynomials are zero, then we formally set $ \gcd \{X_1,\ldots,X_K\} = 0 \in \bbC_{\le 0}[z] = \bbC$.
\end{definition}
The GCD of polynomials in \Cref{def:gcd} enjoys the usual properties of the greatest common divisor (such as associativity). In particular, the GCD exists and is unique up to a multiplication by a nonzero scalar. Therefore the notation
\begin{equation}\label{eq:gcd}
H = \gcd \{X_1,\ldots,X_K\}
\end{equation}
means that $H$ is a GCD;  the equality \eqref{eq:gcd} is meant modulo multiplication by a nonzero scalar (a typical abuse of notation in the literature). 
We say that the polynomials are coprime if $\gcd \{X_1,\ldots,X_K\} = 1$. 
\begin{remark}
Similarly to \Cref{rem:divisibility_alternative}, the GCD of polynomials $X_k \in \bbC_{\le N_k}[z]$  coincides with the GCD in the usual sense with the  zero leading coefficients equal to the minimal number of zero leading coefficients among $X_k$.
\end{remark}
Finally, we recall a matrix-based criterion of coprimeness of two polynomials.
\begin{theorem}\label{thm:sylvester}
Two polynomials $X_1, X_2 \in \bbC_{\le N-1}[z]$ are
coprime in the sense of  \Cref{def:gcd} if and only if its $2N \times 2N$ Sylvester matrix 
\[
S(X_1,X_2) = \begin{bmatrix} \multmat{x_1}{N-1} & \multmat{x_2}{N-1} \end{bmatrix}
\]
is nonsingular.
\end{theorem}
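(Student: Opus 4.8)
The plan is to prove the Sylvester criterion for coprimeness in the sense of \Cref{def:gcd} by relating nonsingularity of $S(X_1,X_2)$ to the existence of a nontrivial common divisor. The key observation is that the $2N \times 2N$ matrix $S(X_1,X_2) = \begin{bmatrix} \multmat{x_1}{N-1} & \multmat{x_2}{N-1} \end{bmatrix}$ acts as the multiplication-by-$X_1$ and multiplication-by-$X_2$ maps on $\bbC_{\le N-1}[z]$, landing in $\bbC_{\le 2N-2}[z]$. Concretely, for a pair of coefficient vectors $\bfu, \bfv \in \bbC^{N}$ (representing $U, V \in \bbC_{\le N-1}[z]$), the matrix-vector product $S(X_1,X_2)\begin{bmatrix}\bfu \\ \bfv\end{bmatrix}$ equals the coefficient vector of $X_1(z)U(z) + X_2(z)V(z)$ by \eqref{eq:polyMultMatrixProduct}. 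Thus the singularity of $S(X_1,X_2)$ is equivalent to the existence of a nonzero pair $(U,V)$ with $X_1 U + X_2 V = 0$ in $\bbC_{\le 2N-2}[z]$.

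\emph{First I would} establish the forward direction by contraposition: suppose $X_1, X_2$ are not coprime, so by \Cref{def:gcd} there is a common divisor $H \in \bbC_{\le D}[z]$ with $D \ge 1$, say $X_1 = H A_1$ and $X_2 = H A_2$ with $A_1, A_2 \in \bbC_{\le N-1-D}[z]$ in the sense of \Cref{def:mult}. Then setting $U = A_2$ and $V = -A_1$ yields $X_1 U + X_2 V = H A_1 A_2 - H A_2 A_1 = 0$, and at least one of $A_1, A_2$ is nonzero (otherwise $X_1 = X_2 = 0$, whose GCD is $0 \ne 1$, still non-coprime and giving an easy nontrivial kernel element), so the kernel of $S(X_1,X_2)$ is nontrivial and the matrix is singular.

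\emph{For the converse} I would argue that singularity produces a common divisor. If $S(X_1,X_2)$ is singular, there is a nonzero $(U,V)$ with $X_1 U = -X_2 V$ in $\bbC_{\le 2N-2}[z]$. The heart of the argument is to pass from this relation, which lives in the bounded-degree spaces of \Cref{def:mult}, to ordinary polynomial arithmetic in $\bbC[z]$, extract a common factor using unique factorization, and then verify the leading-coefficient bookkeeping of \Cref{rem:divisibility_alternative} so that the common factor is genuinely a divisor in the sense of \Cref{def:divi}. In ordinary $\bbC[z]$ the relation $X_1 U = -X_2 V$ forces $X_1 / \gcd_{\bbC[z]}(X_1,X_2)$ to divide $V$; comparing degrees (each of $U,V$ has degree at most $N-1$ while $X_1, X_2$ have degree at most $N-1$) shows the usual GCD must be nonconstant whenever the degrees are generic, and one then tracks the number of zero leading coefficients to conclude the divisor is nontrivial in the sense of \Cref{def:gcd}.

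\emph{The hard part will be} the degree and leading-coefficient bookkeeping in the converse direction, precisely because the formalism of \Cref{def:mult} and \Cref{def:divi} treats roots at infinity (i.e.\ zero leading coefficients) as genuine divisor data, so a relation that would be innocuous in $\bbC[z]$ may carry extra multiplicity-at-infinity information. One must be careful that a nonzero kernel vector $(U,V)$ could have $U$ or $V$ equal to zero — corresponding to the case where one of $X_1, X_2$ is itself zero or where all the action is at infinity — and handle these degenerate cases separately using the conventions $\gcd\{A,0\} = A$ from \Cref{def:gcd}. The cleanest route is likely to reduce to the classical Sylvester resultant theorem for $\bbC[z]$ and then translate the statement through \Cref{rem:divisibility_alternative}, checking that the count of zero leading coefficients lines up on both sides.
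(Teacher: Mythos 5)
The paper never proves \Cref{thm:sylvester} --- it is quoted from \cite[Section 2]{usevich_variable_2017} --- so your proposal has to stand on its own. Your translation of singularity into a B\'ezout relation, and your forward direction (not coprime $\Rightarrow$ singular, via the kernel vector $(U,V)=(A_2,-A_1)$), are correct and standard. The converse, however, has a genuine gap, and it is not merely the leading-coefficient bookkeeping you flag as ``the hard part'': with the blocks $\multmat{x_1}{N-1}$ and $\multmat{x_2}{N-1}$ as displayed, each block lies in $\bbC^{(2N-1)\times N}$, so the matrix is $(2N-1)\times 2N$ rather than $2N\times 2N$, and the pair $(U,V)=(X_2,-X_1)$ with $U,V\in\bbC_{\le N-1}[z]$ is \emph{always} a nonzero kernel element because $X_1X_2-X_2X_1=0$. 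The map you describe is therefore never injective, and no amount of tracking zero leading coefficients will rescue ``singular $\Rightarrow$ not coprime.'' The criterion is only true (and the matrix only square, of size $(2N-2)\times(2N-2)$) for the blocks $\multmat{x_i}{N-2}$, i.e.\ with $U,V\in\bbC_{\le N-2}[z]$; the stated size ``$2N\times 2N$'' is already inconsistent with the displayed blocks, and the dimension count should have been checked before building the argument on it.

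With the corrected degree bound the converse closes more cleanly than by reducing to the classical resultant over $\bbC[z]$ and re-auditing leading coefficients afterwards. Use the extended fundamental theorem of algebra \eqref{eq:polyFactorizationFTA}: a nonzero element of $\bbC_{\le D}[z]$ has exactly $D$ roots in $\bbC\cup\{\infty\}$ counted with multiplicity, and these multiplicities (including at $\infty$) add under the product \eqref{def:mult}. If $X_1U=-X_2V$ with $U,V\in\bbC_{\le N-2}[z]$ not both zero, and $X_1,X_2$ were coprime in the sense of \Cref{def:gcd} with both nonzero (the case where one vanishes is immediate, since the GCD is then the other polynomial, an element of $\bbC_{\le N-1}[z]$ that is never $1\in\bbC_{\le 0}[z]$ for $N\ge 2$), then every root of $X_1$ would have to appear in $V$ with full multiplicity; but $X_1$ carries $N-1$ roots in $\bbC\cup\{\infty\}$ while $V$ carries only $N-2$, forcing $V=0$ and hence $U=0$, a contradiction. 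This root count also removes the hedge ``whenever the degrees are generic'' from your sketch: the equivalence must hold for all $X_1,X_2$, and it is precisely the roots at infinity that make the non-generic cases come out right.
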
 

We refer the reader to \cite[Section 2]{usevich_variable_2017} for more details and properties of the GCD, as well as the generalizations of \Cref{thm:sylvester} and the matrix-based algorithms to compute the GCD.

\subsection{Roots at infinity}
An important tool used in this paper is that we operate with $\infty$ roots.
We will say that the polynomial $A \in  \bbC_{\le D}[z]$ in \eqref{eq:poly} has a root at $\infty$ (with multiplicity $\mu_k$) if its leading $\mu_k$ coefficients vanishes (\ie if $a[D]=\cdots =a[D-\mu_k+1]=0$).
We will formally write $(z - \infty)^{d} B(z)$ to denote that $d$ zero leading coefficients are appended to the polynomial.
\begin{example}\label{ex:simple_poly}
Consider the polynomial from \Cref{ex:simple_poly_divisibility}.
The polynomial   has roots $\{\infty, -2, 1, 0\}$, where  the root $\infty$ has multiplicity $2$.
Hence it has the following factorization
\begin{equation}
\label{eq:ex_poly_factoriza}
A(z) = \frac{1}{2} (z-\infty)^2 (z - 1) (z+2) z.
\end{equation}
\end{example}
\begin{remark}
The root at infinity can be formally defined as:
\[
  (z-\infty) := 0 \cdot z + 1  \in \bbC_{\le 1} [z].
\]
Then the multiplication by such polynomial in the sense of the definition in \eqref{eq:polyMultMatrixProduct} corresponds exactly to adding a zero leading coefficient. 
In particular,
\[
(z-\infty)^{d} := 0 \cdot z^d +  0 \cdot z^{d-1}  +\cdots +  0 \cdot z + 1  \in \bbC_{\le d} [z].
\]
\end{remark}
With such a convention, the following extended version of the fundamental theorem of algebra holds true:
  any nonzero polynomial $A \in \bbC_{\le D}[z] \setminus \{ \boldsymbol{0}\}$ can be uniquely factorized  (up to permutation of roots)  as
  \begin{equation}\label{eq:polyFactorizationFTA}
    A(z) = \lambda \prod_{i=1}^{m}  (z-\alpha_{i})^{\mu_i},
  \end{equation}
  where $\lambda \in \bbC$, $\alpha_i \in \bbC \cup \{ \infty \}$ are distinct roots  and $\mu_i$ are the multiplicities of $\alpha_i$,
  so that their sum  is
  \[
    \mu_1 + \cdots +\mu_m =D.
  \]
Finally, we remark that the conjugate reflection \eqref{eq:conjReverse} leads to reflection of roots.
\begin{lemma}\label{lem:conj_reflection_roots}
The conjugate reflection of the polynomial \eqref{eq:polyFactorizationFTA} admits a factorization
  \[
  \widetilde{A}(z) = \widetilde{\lambda} \prod_{i=1}^{m}  \left(z-\overline{\alpha^{-1}_{i}}\right)^{\mu_i}, \quad \text{where } \widetilde{\lambda} := \overline{\lambda}  \prod_{\substack{i =1\\ \alpha_i \neq \infty}}^m (-\overline{\alpha_{i}})^{\mu_i},
  \]
  \ie the roots $\alpha_i$ are mapped to $\overline{\alpha^{-1}_i}$, where $0$ is formally assumed to be the inverse of $\infty$ and vice versa.
\end{lemma}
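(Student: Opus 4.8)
The plan is to reduce the statement to a one-line computation on linear factors by exploiting that the conjugate reversal is multiplicative with respect to the graded multiplication of \Cref{def:mult}. First I would establish the auxiliary identity that for $A \in \bbC_{\le D_1}[z]$ and $B \in \bbC_{\le D_2}[z]$, with the product taken in $\bbC_{\le D_1+D_2}[z]$, one has $\widetilde{AB} = \widetilde{A}\,\widetilde{B}$, while $\widetilde{\lambda} = \overline{\lambda}$ for a scalar $\lambda \in \bbC_{\le 0}[z]$. This follows directly from the analytic form of the conjugate reversal: writing $\widetilde{A}(z) = z^{D_1}\overline{A(\overline{z}^{-1})}$ and likewise for $B$, the exponents $z^{D_1}$ and $z^{D_2}$ combine into $z^{D_1+D_2}$ and the conjugated evaluations multiply, so the identity holds \emph{together with the correct grading}. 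This degree bookkeeping is essential, since the conjugate reversal genuinely depends on the space $\bbC_{\le D}[z]$ in which a polynomial is viewed (cf. \Cref{ex:simple_poly_divisibility}).

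Given multiplicativity, I would apply it to the fundamental-theorem factorization \eqref{eq:polyFactorizationFTA}, $A(z) = \lambda \prod_{i=1}^{m} (z-\alpha_i)^{\mu_i}$, viewing each $(z-\alpha_i)$ as an element of $\bbC_{\le 1}[z]$ and the scalar $\lambda$ as an element of $\bbC_{\le 0}[z]$; since $\mu_1 + \cdots + \mu_m = D$ the degrees add up consistently. This reduces the lemma to computing $\widetilde{(z-\alpha)}$ for a single linear factor and then collecting the resulting roots and scalars.

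The core computation splits into three cases according to the type of root. For a finite nonzero $\alpha$, a direct evaluation gives $\widetilde{(z-\alpha)} = z\bigl(z^{-1} - \overline{\alpha}\bigr) = 1 - \overline{\alpha}\,z = -\overline{\alpha}\bigl(z - \overline{\alpha^{-1}}\bigr)$, which exhibits both the root map $\alpha \mapsto \overline{\alpha^{-1}}$ and the per-factor scalar $-\overline{\alpha}$. For $\alpha = 0$ the factor is $z$, whose reversed coefficient vector yields $\widetilde{(z-0)} = 1 = (z-\infty)$; for $\alpha = \infty$ the factor is $(z-\infty) = 1$ and $\widetilde{(z-\infty)} = z = (z-0)$. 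These two boundary cases confirm the convention that $0$ and $\infty$ are exchanged, and in both the scalar equals $1$. Reassembling via multiplicativity then gives $\widetilde{A}(z) = \widetilde{\lambda}\prod_{i=1}^{m}\bigl(z - \overline{\alpha_i^{-1}}\bigr)^{\mu_i}$, where $\widetilde{\lambda}$ is the product of $\overline{\lambda}$ with all the per-factor scalars; since the factors at $0$ and at $\infty$ each contribute the unit scalar, only the finite nonzero roots contribute a nontrivial $-\overline{\alpha_i}$, matching the stated expression for $\widetilde{\lambda}$.

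The main obstacle I anticipate is not any single algebraic step but the consistent treatment of the degree grading together with the degenerate roots: the finite-root formula $-\overline{\alpha}(z-\overline{\alpha^{-1}})$ degenerates precisely at $\alpha \in \{0,\infty\}$, where the image root sits at the boundary and the naive scalar must be replaced by the unit scalar read off from the reversed coefficient vector. Keeping track of which $\bbC_{\le D}[z]$ each factor lives in throughout the multiplicative reduction is exactly what makes the roots-at-infinity bookkeeping — and hence the precise form of $\widetilde{\lambda}$ and the $0 \leftrightarrow \infty$ swap — come out correctly.
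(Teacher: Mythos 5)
Your proof is correct and is precisely the ``straightforward calculation'' the paper invokes: reduce to linear factors using multiplicativity of the conjugate reversal (with the degree grading tracked), compute $\widetilde{(z-\alpha)}$ in the three cases $\alpha\in\bbC\setminus\{0\}$, $\alpha=0$, $\alpha=\infty$, and reassemble. One point to flag: the constant your computation actually yields is $\overline{\lambda}\prod_{\alpha_i\notin\{0,\infty\}}(-\overline{\alpha_i})^{\mu_i}$, whereas the lemma as printed excludes only $\alpha_i=\infty$ from the product, so whenever $0$ is a root the printed formula would give $\widetilde{\lambda}=0$; your version is the correct one (the paper's own example, where $A$ has the root $0$ and $\widetilde{\lambda}=-1$, confirms this), so your closing claim that the result ``matches the stated expression'' should instead say that it corrects a small typo in it.
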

\begin{proof}
Follows from straightforward calculation.
\end{proof}

  \begin{example}
  For \Cref{ex:simple_poly}, the conjugate reflection $\widetilde{A}(z)  \in \bbC_{\le 5}[z]$, as well as its factorization  becomes:
  \[
  \widetilde{A}(z)  = 0 \cdot z^5 -  z^4 + \frac{1}{2} z^3 + \frac{1}{2} z^2  = (-1) (z - \infty) (z+\frac{1}{2})(z-1)z^2.
  \]
  which  has roots $\{\infty, 1, -\frac{1}{2}, 0\}$, where the root $0$ has multiplicity $2$.
  \end{example}
  Graphically,  the conjugate reflection of the roots has a nice interpretation in terms of the Riemann sphere: the mapping of the root under conjugate reflection becomes simply a reflection with respect to the plane passing through the equator, see Fig.~\ref{fig:RiemannSphere}.
  
  \begin{figure*}[t]
    \includegraphics[width=\textwidth]{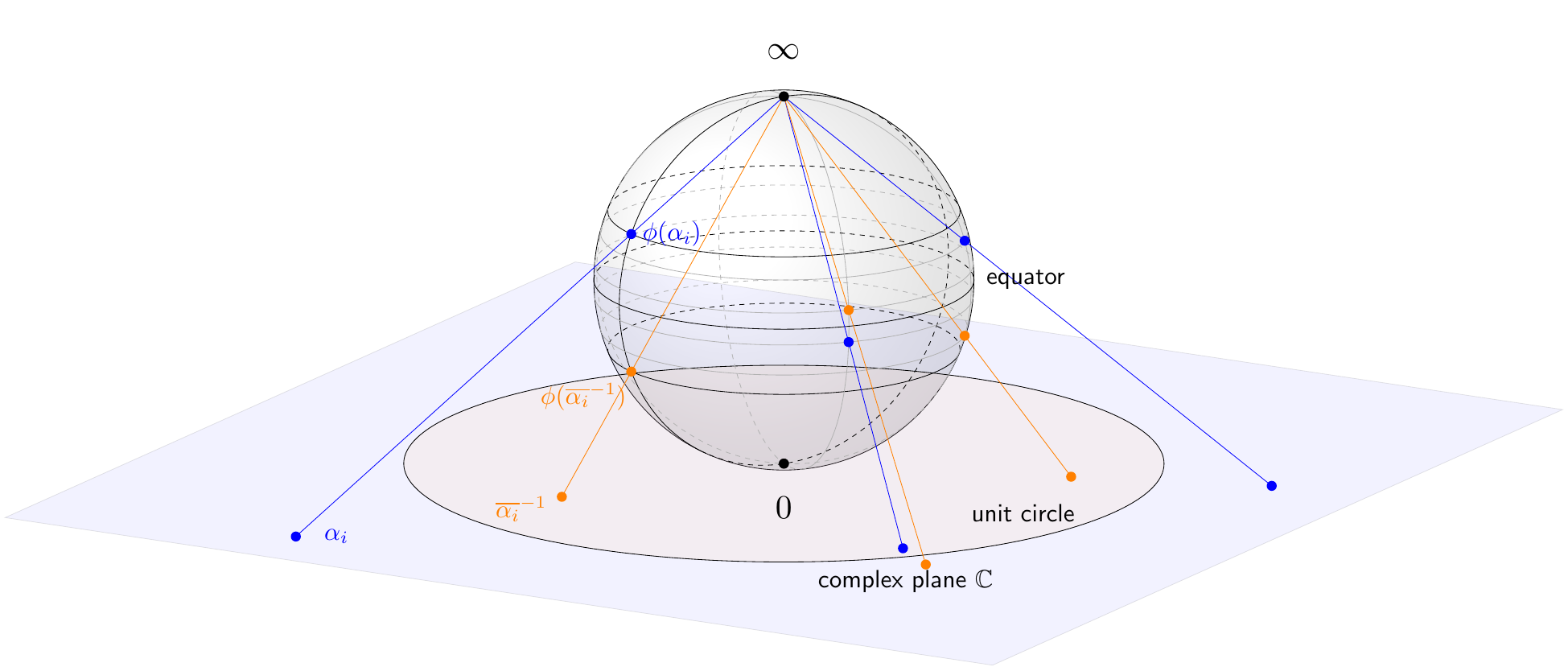}
    \caption{Complex plane and the Riemann sphere (the preimage under the stereographic projection). The conjugate inversion corresponds to reflection with respect to the equator on the Riemann sphere. Here $\phi: \bbC \to \calS^2$ denote the inverse stereographic mapping onto the sphere $\calS^2$.}\label{fig:RiemannSphere}
  \end{figure*}

\begin{remark}
When dealing with homogeneous polynomials, the roots in fact belong to the projective space $\mathbb{P}^1$ which corresponds exactly to $\mathbb{C} \cup \{\infty\}$.
\end{remark}

\begin{remark}
Let \eqref{eq:polyFactorizationFTA} be the factorization of a polynomial  $A \in\bbC_{\le D}[z]$ with roots $\alpha_i$ of respective multiplicities $\mu_i$. 
Then $B\in  \bbC_{\le D'}[z] \setminus \{ \boldsymbol{0}\}$ is a divisor of $A$ if and only if it can be factorized as 
 \begin{equation}\label{eq:polyFactorizationFTAbis}
    B(z) = \lambda' \prod_{i=1}^{m}  (z-\alpha_{i})^{\nu_i},
  \end{equation}
where  $0\leq \nu_i \leq \mu_i$ and $\nu_1 + \cdots + \nu_m = D'$.
\end{remark}

\begin{example}\label{ex:simple_poly_divisibility_roots}
Continuing  \Cref{ex:simple_poly_divisibility}, the polynomial $0 \cdot z^4 + 0 \cdot z^3 + 0 \cdot z^2 + z + 2 = (z-\infty)^3(z+2) \in \bbC_{\le 4}[z]$ is not the divisor of $A(z)$  from \eqref{eq:ex_poly}, because there are not enough infinite roots in the expansion of $A(z)$.
\end{example}

\section{Uniqueness of factorizations}
  \label{sub:uniquenessGeneral}
The main goal of this section is to provide a proof of  \Cref{eq:thm_main}, thus giving a characterization of the uniqueness properties of the polynomial factorization problem \eqref{prob:PAF}.
In fact, we will prove a generalized version of \Cref{eq:thm_main}.


\subsection{Key lemma and the coprime case}
The following key lemma links the GCD of polynomials $X_k(z)$ with the GCD of the elements of the matrix polynomial $\boldsymbol\Gamma(z)$.
  \begin{lemma}\label{lemma:PAFspectralFactorisation}
Let $Q(z) := \gcd\lbrace X_1, X_2, \ldots, X_K\rbrace$  where $Q \in \bbC_{\le D}[z]$, and define
\begin{equation}\label{eq:H_and_Q}
H(z)  = Q(z)\tilde{Q}(z).
\end{equation}
Then the GCD of the elements of $\boldsymbol{\Gamma}(z)$ must
be equal to $H(z)$:
\[
 \gcd\lbrace\Gamma_{ij}\rbrace_{i,j=1}^{K,K} = H(z).
\]
\end{lemma}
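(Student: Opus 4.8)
The plan is to reduce everything to the root-multiset description of divisibility furnished by the extended fundamental theorem of algebra \eqref{eq:polyFactorizationFTA}: for nonzero polynomials, $B$ divides $C$ precisely when the root multiset of $C$ (over $\bbC\cup\{\infty\}$, counting multiplicities) contains that of $B$, as in \eqref{eq:polyFactorizationFTAbis}, and the GCD of a family is obtained root-by-root by taking the minimal multiplicity. First I would use the definition of $Q$ to write $X_k(z) = Q(z)P_k(z)$ for each $k$, with $Q\in\bbC_{\le D}[z]$ and cofactors $P_k \in \bbC_{\le N-1-D}[z]$ satisfying $\gcd\{P_1,\ldots,P_K\} = 1$; this is the defining property of the GCD in \Cref{def:gcd}. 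The next ingredient is the multiplicativity of the conjugate reversal: a direct computation from \eqref{eq:conjReverse}, keeping track of the degree bounds, gives $\widetilde{X}_k = \widetilde{Q}\,\widetilde{P}_k$. Substituting into the factorization \eqref{prob:PAF} then yields $\Gamma_{ij}(z) = X_i(z)\widetilde{X}_j(z) = Q(z)\widetilde{Q}(z)\,P_i(z)\widetilde{P}_j(z) = H(z)\,P_i(z)\widetilde{P}_j(z)$, so that $H = Q\widetilde{Q}$ is visibly a common divisor of all the $\Gamma_{ij}$.

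With this factorization in hand, it remains to prove that $H$ is the \emph{greatest} common divisor, for which I would show that the family $\{P_i\widetilde{P}_j\}_{i,j=1}^{K,K}$ is coprime. Since in the root-multiset picture multiplication adds multiplicities while the GCD takes minimal multiplicities root-by-root, a factor $H$ present in every entry factors out of the GCD as well, i.e. $\gcd\{H\,P_i\widetilde{P}_j\} = H\,\gcd\{P_i\widetilde{P}_j\}$. Hence $\gcd\{\Gamma_{ij}\} = H\,\gcd\{P_i\widetilde{P}_j\}$, and establishing $\gcd\{P_i\widetilde{P}_j\} = 1$ immediately gives the claimed identity $\gcd\{\Gamma_{ij}\} = H$.

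The main obstacle, and the heart of the argument, is this coprimeness. I would argue by contradiction: if $\gcd\{P_i\widetilde{P}_j\}$ were nonconstant it would possess a root $\beta \in \bbC\cup\{\infty\}$ common to every product $P_i\widetilde{P}_j$. Since the root multiset of a product is the union of those of its factors, for each pair $(i,j)$ the root $\beta$ must lie in $P_i$ or in $\widetilde{P}_j$. A short dichotomy then forces one of two global alternatives: either $\beta$ is a root of every $P_i$, or $\beta$ is a root of every $\widetilde{P}_j$ (for if some $P_{i_0}$ misses $\beta$, then $\beta$ must be a root of $\widetilde{P}_j$ for all $j$). The first alternative makes $\beta$ a common root of $P_1,\ldots,P_K$, contradicting $\gcd\{P_k\}=1$. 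To rule out the second, I would invoke \Cref{lem:conj_reflection_roots}: conjugate reversal is a multiplicative involution on $\bbC_{\le D}[z]$ that sends each root $\alpha$ to $\conj{\alpha^{-1}}$, hence it carries common divisors of $\{P_j\}$ bijectively onto common divisors of $\{\widetilde{P}_j\}$ of the same number of roots, so that $\gcd\{\widetilde{P}_j\} = \widetilde{\gcd\{P_j\}} = \widetilde{1} = 1$; thus $\beta$ cannot be a common root of all $\widetilde{P}_j$ either. Both alternatives being impossible, no such $\beta$ exists and the family is coprime, completing the proof. The only points requiring care throughout are the bookkeeping of roots at infinity and the nonstandard divisibility of \Cref{def:divi}, but these are handled uniformly by working with the factorization \eqref{eq:polyFactorizationFTA}, in which the root $\infty$ is treated on the same footing as finite roots.
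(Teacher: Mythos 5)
Your proposal is correct and follows essentially the same route as the paper: decompose $X_k = Q R_k$ with coprime quotients, observe $\Gamma_{ij} = H\,R_i\widetilde{R}_j$, and conclude using that the $R_k$ and hence the $\widetilde{R}_k$ are coprime. The only cosmetic difference is that the paper establishes $\gcd\{R_i\widetilde{R}_j\}=1$ by iterated row-wise GCD identities, whereas you prove it directly by a root-dichotomy contradiction; both rest on the same two facts (GCD factors out a common multiplier, and conjugate reversal preserves coprimeness).
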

\begin{proof}
Let $R_1, R_2, \ldots R_K \in \bbC_{\le N-D-1}[z]$  be the corresponding quotients, \ie $X_k(z) = Q(z)R_k(z)$ for $k = 1, \ldots, K$ with $\gcd\lbrace R_1,R_2, \ldots, R_K\rbrace = 1$.
Direct calculations show that, for $i,j = 1, \ldots, K$,  
\[
\Gamma_{ij}(z)  = X_i(z)\tilde{X}_j(z)= R_i(z)Q(z)\tilde{R}_j(z)\tilde{Q}(z) = R_i(z)\tilde{R}_j(z) H(z) .
\]
  Then the GCD of polynomials $\Gamma_{ij}(z)$ can be explicitly computed as
    \begin{align*}
      &  \gcd\lbrace\Gamma_{ij}\rbrace_{i,j=1}^{K,K}\\
       & = \gcd\left\lbrace\gcd\lbrace\Gamma_{1j}\rbrace_{j=1}^K,\gcd\lbrace\Gamma_{2j}\rbrace_{j=1}^K, \ldots, \gcd\lbrace\Gamma_{Rj}\rbrace_{j=1}^K\right\rbrace &                                                   \\
    & = \gcd\left\lbrace R_1H, R_2H, \ldots,R_K H \right\rbrace                                      & \text{ since } \gcd\lbrace \tilde{R_1}, \tilde{R}_2, \ldots, \tilde{R}_K\rbrace = 1 \\
    & = H(z)                                                       & \text{ since } \gcd\left\lbrace R_1, R_2, \ldots R_K\right\rbrace = 1,
    \end{align*}
which concludes the proof.
\end{proof}
\begin{remark}\label{rem:essential_uniqueness}
If the matrix polynomial $\boldsymbol{\Gamma}(z)$ can be factorized as \eqref{prob:PAF}, then any simultaneous rescaling of the polynomials $X_1(z), \ldots, X_K(z)$ by $\beta\in \mathbb{T}$ provides an alternative factorization since 
\[
\begin{bmatrix}
\beta X_1(z) \\
\vdots \\
\beta X_K(z) 
\end{bmatrix}
\begin{bmatrix}
\conj{\beta} \widetilde{X}_1(z) &
\cdots &
\conj{\beta}  \widetilde{X}_K(z) 
\end{bmatrix} = \begin{bmatrix}
X_1(z) \\
\vdots \\
X_K(z) 
\end{bmatrix}
\begin{bmatrix}
\widetilde{X}_1(z) &
\cdots &
\widetilde{X}_K(z) 
\end{bmatrix},
\]
i.e., polynomials $Y_k(z) = \beta X_k(z)$ provide an equivalent factorization since $\beta \conj{\beta} = 1$.

In what follows, we will say that the solution \eqref{prob:PAF} is essentially unique if it is uniqueness up to a global scaling by $\beta \in \bbT$, \ie for any alternative factorization 
\begin{equation}\label{eq:paf_alternative}
\Gamma(z) = \begin{bmatrix} Y_1(z) \\ \vdots \\ Y_K(z)  \end{bmatrix}
\begin{bmatrix} \widetilde{Y}_1(z) & \cdots & \widetilde{Y}_K(z) \end{bmatrix},
\end{equation}
there exists $\beta \in \mathbb{T}$ such that $Y_k(z) = \beta X_k(z)$ for all $k$.
\end{remark}

Armed with \Cref{lemma:PAFspectralFactorisation}, we can already give the characterization of the coprime case.
\begin{proposition}\label{prop:coprime}
Let $\{\Gamma_{ij}\}_{i,j=1}^{K,K}$ be coprime (equivalently, let $X_1(z),\ldots,X_K(z)$ be coprime). Then
\begin{enumerate}
\item the rank-one factorization \eqref{prob:PAF} is  essentially unique (in the sense of  \cref{rem:essential_uniqueness});
\item in particular, let  
\begin{equation}\label{eq:Gamma_row_gcd}
A_j(z) :=  \gcd(\Gamma_{j1}, \ldots, \Gamma_{jK})
\end{equation}
and fix an index $j$ such that $A_j(z) \not\equiv 0$. Then a factorization \eqref{eq:paf_alternative} can be obtained by 
\begin{equation}\label{eq:paf_alternative_constructive}
Y_k(z) =c_j \cdot \frac{\Gamma_{kj}(z)}{\widetilde{A}_j(z)},
\end{equation}
where $c_j$ is the normalization constant $c_j = {\|A_{j}\|}/{\sqrt{\Gamma_{jj}(z)|_{z^N-1}}}$.
\end{enumerate}
\end{proposition}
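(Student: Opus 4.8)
The plan is to prove the constructive formula (part~2) first and then bootstrap it into the essential uniqueness (part~1). The computational core is a single GCD identity. Since $\Gamma_{jk}(z) = X_j(z)\widetilde{X}_k(z)$, the common factor $X_j$ can be pulled out of the $j$-th row GCD:
\[
A_j(z) = \gcd(X_j\widetilde{X}_1,\ldots,X_j\widetilde{X}_K) = X_j(z)\cdot\gcd(\widetilde{X}_1,\ldots,\widetilde{X}_K).
\]
By hypothesis $X_1,\ldots,X_K$ are coprime, and \Cref{lem:conj_reflection_roots} shows the conjugate reflection merely reflects roots (including at $\infty$), so it preserves the absence of a common root; hence $\gcd(\widetilde{X}_1,\ldots,\widetilde{X}_K)=1$ and $A_j=\lambda_j X_j$ for a nonzero scalar $\lambda_j$, giving $\widetilde{A}_j=\overline{\lambda_j}\widetilde{X}_j$.

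With this identity the formula \eqref{eq:paf_alternative_constructive} becomes transparent: $\Gamma_{kj}/\widetilde{A}_j = X_k\widetilde{X}_j/(\overline{\lambda_j}\widetilde{X}_j) = X_k/\overline{\lambda_j}$, so $Y_k = (c_j/\overline{\lambda_j})X_k$ is a global rescaling of the original solution. It remains to check the scaling is unimodular. By \Cref{lem:inner_product}, $\Gamma_{jj}(z)|_{z^{N-1}} = \|X_j\|_2^2$, while $\|A_j\| = |\lambda_j|\,\|X_j\|_2$; therefore $c_j=|\lambda_j|$ and $c_j/\overline{\lambda_j} = \lambda_j/|\lambda_j| \in\bbT$. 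Setting $\beta := \lambda_j/|\lambda_j|$ and using $\beta\overline{\beta}=1$ confirms $Y_k\widetilde{Y}_l = \Gamma_{kl}$, so \eqref{eq:paf_alternative_constructive} yields a genuine factorization, proving part~2.

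For part~1, I would show every alternative factorization \eqref{eq:paf_alternative} is such a rescaling. Applying \Cref{lemma:PAFspectralFactorisation} to the $Y$-factorization together with $\gcd\{\Gamma_{ij}\}=1$ forces $\gcd\{Y_1,\ldots,Y_K\}=1$, so the $Y_k$ are coprime too. Repeating the GCD extraction on the $Y$-factorization gives $A_j=\mu_j Y_j$; since $A_j$ is intrinsic to $\boldsymbol{\Gamma}$ and equals $\lambda_j X_j$, we obtain $Y_j=\nu_j X_j$ for a nonzero scalar $\nu_j$, for each $j$ with $X_j\neq 0$ (equivalently $A_j\not\equiv 0$). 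Substituting into $\Gamma_{jl}=Y_j\widetilde{Y}_l=\nu_j\overline{\nu_l}\,\Gamma_{jl}$, the diagonal $j=l$ forces $|\nu_j|^2=1$, while any pair with $X_j,X_l\neq 0$ gives $\nu_j\overline{\nu_l}=1$, hence $\nu_j=\nu_l$. Thus a single $\beta\in\bbT$ serves all nonzero indices; indices with $X_j=0$ force $Y_j=0$ because $\Gamma_{jj}|_{z^{N-1}}=\|Y_j\|_2^2=0$, so $Y_k=\beta X_k$ for all $k$, i.e. the factorization is essentially unique in the sense of \Cref{rem:essential_uniqueness}.

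The step I expect to be the main obstacle is justifying the GCD identity $A_j=X_j\cdot\gcd(\widetilde{X}_1,\ldots,\widetilde{X}_K)$ and the claim $\gcd(\widetilde{X}_1,\ldots,\widetilde{X}_K)=1$ rigorously within the roots-at-infinity formalism of \Cref{sec:background}: factoring a common term out of a GCD and passing through the conjugate reflection both require careful bookkeeping of zero leading coefficients (roots at $\infty$) under \Cref{def:gcd}. Once this is settled, the remainder reduces to the elementary unimodularity computations above.
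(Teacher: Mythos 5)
Your proposal is correct and follows essentially the same route as the paper: the row GCD identity $A_j=\lambda_j X_j$ (via coprimality of the conjugate reflections), the normalization check through \Cref{lem:inner_product}, and the appeal to \Cref{lemma:PAFspectralFactorisation} to reduce any alternative factorization to the same constructive formula. The only cosmetic difference is in part~1, where the paper applies part~2 once to each factorization to get a single global constant immediately, whereas you derive per-index constants $\nu_j$ and then show they coincide; both arguments are valid, and your explicit handling of indices with $X_j=0$ is a small point the paper leaves implicit.
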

\begin{proof}
We first prove 2).
We first show that the polynomial $Y_k(z)$ defined in \eqref{eq:paf_alternative_constructive} provides an alternative factorization.
Indeed, for $A_j$ defined in \eqref{eq:Gamma_row_gcd} we have
\begin{equation}
A_j(z) =  \gcd(X_{j} \widetilde{X}_1, \ldots, X_{j} \widetilde{X}_K)= d_j X_j(z), \quad d_j \neq 0.
\end{equation}
since $\widetilde{X}_1(z),\ldots, \widetilde{X}_{K}(z)$ are coprime.
Then for the polynomial $Y_k(z)$ defined in \eqref{eq:paf_alternative_constructive} we get
\begin{equation}\label{eq:Yk_Xk_relation}
Y_k(z) = \frac{X_k(z)\widetilde{X}_j(z)}{\overline{d_j} \widetilde{X}_j(z)}
\cdot \frac{\|d_j X_j(z)\|}{\|X_j(z)\|} =  {\frac{d_j}{|d_j|}}X_k(z),
\end{equation}
thanks to \Cref{lem:inner_product}. By defining $\beta = \frac{d_j}{|d_j|}$, we observe that  $\beta \in \bbT$, and therefore $\Gamma_{ij}(z) = Y_{i}(z) \widetilde{Y}_j(z)$.

\noindent{Proof of 1)} Assume that apart from \eqref{eq:paf_alternative}, there exists an alternative factorization $\Gamma_{ij}(z) = U_{i}(z) \widetilde{U}_j(z)$.
Note that by \Cref{lemma:PAFspectralFactorisation}, we have $\gcd\{U_1,\ldots,U_K\} = 1$.
Therefore $\Gamma_{ij}(z) = X_{i}(z) \widetilde{X}_j(z)$  and $\Gamma_{ij}(z) = U_{i}(z) \widetilde{U}_j(z)$ are two valid factorizations that satisfy the conditions of 2).
Now assume that $Y_k$ is computed as in \eqref{eq:Yk_Xk_relation}.
Then, by 2) there are two constants $\beta,\delta \in \bbT$ such that 
\[
Y_k=\beta X_k \quad \text{and}\quad  Y_k=\delta U_k, \quad \text{for all }k. 
\]
Therefore,   we have that $U_k = \delta^{-1} \beta X_k$, for all $k$, which completes the proof.
\end{proof}

\Cref{prop:coprime} already tells us how to deal with the coprime case, and provides us  with a constructive way to find the factorization  \eqref{prob:PAF} from the matrix polynomial $\boldsymbol{\Gamma}(z)$ (\ie, by computing $Y_k$ as in \eqref{eq:Yk_Xk_relation}).
Moreover, this algorithm can be further simplified as shown by the following remark.
\begin{remark}
In the conditions of \eqref{prop:coprime}, the polynomials $X_k$ can be retrieved, up to individual constants from \eqref{eq:Gamma_row_gcd} (\i.e. $A_k  = \beta_k X_k$).
Therefore, if one is interested, for example, only in the roots of $X_k$, those can be obtained just by computing $A_k$ as in \eqref{eq:Gamma_row_gcd}.
\end{remark}
We also note that the \Cref{prop:coprime} covers the generic case, as shown by the following corollary.

\begin{corollary}[Almost everywhere uniqueness of \eqref{prob:PAF}]
In the generic case, the solution of \eqref{prob:PAF} is essentially unique:
there exists a set of (Lebesgue) measure zero in $A \subset (\bbC_{\le N -1}[z])^K$, such that for all $(X_1,\ldots,X_K) \in (\bbC_{\le N-1 }[z])^K \setminus A$ the solution of \eqref{prob:PAF} is essentially unique.
\end{corollary}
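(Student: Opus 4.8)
The plan is to reduce the generic statement to \Cref{prop:coprime} by showing that coprimeness of $X_1, \ldots, X_K$ holds outside a measure-zero set. The essential observation is that \Cref{prop:coprime} already guarantees essential uniqueness whenever the polynomials $X_1(z), \ldots, X_K(z)$ are coprime, so it suffices to exhibit a Lebesgue-null exceptional set $A \subset (\bbC_{\le N-1}[z])^K$ outside of which coprimeness is automatic. I would therefore set
\[
A := \{ (X_1, \ldots, X_K) \in (\bbC_{\le N-1}[z])^K \mid \gcd\{X_1, \ldots, X_K\} \neq 1 \},
\]
and the whole task becomes proving that $A$ has measure zero under the identification of each $\bbC_{\le N-1}[z]$ with $\bbC^{N}$ (and hence $(\bbC_{\le N-1}[z])^K$ with $\bbC^{KN} \cong \bbR^{2KN}$).

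The key step is to characterize $A$ as the zero set of a nontrivial polynomial in the $2KN$ real coordinates. For this I would invoke \Cref{thm:sylvester}: two polynomials $X_1, X_2 \in \bbC_{\le N-1}[z]$ are coprime (in the sense of \Cref{def:gcd}) exactly when the $2N \times 2N$ Sylvester matrix $S(X_1, X_2)$ is nonsingular, i.e. when $\det S(X_1, X_2) \neq 0$. Since $\det S(X_1, X_2)$ is a polynomial function of the coefficients of $X_1$ and $X_2$, the set where it vanishes is an algebraic hypersurface. I would then note that if $X_1, X_2$ are already coprime, then so is the full tuple $X_1, \ldots, X_K$ (adding more polynomials can only shrink the GCD), so $A$ is contained in the vanishing locus $\{\det S(X_1, X_2) = 0\}$, which depends only on the first two coordinate blocks. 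The remaining point is nontriviality: one must check that $\det S(X_1, X_2)$ is not identically zero, which follows by exhibiting a single coprime pair, e.g. $X_1(z) = 1$ and $X_2(z) = z$, whose Sylvester matrix is easily seen to be invertible.

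Once $A$ sits inside the zero set of a single nonzero polynomial, I would close the argument with the standard fact that the zero set of a nonzero real (or complex) polynomial has Lebesgue measure zero; identifying $\bbC^{KN}$ with $\bbR^{2KN}$ and viewing $\real \det S$ and $\imag \det S$ as real polynomials (or simply $|\det S|^2$ as one nonnegative real polynomial that is not identically zero), the vanishing locus is a proper algebraic variety and hence null. This yields $A$ measure zero, and for every tuple outside $A$ the polynomials are coprime, so \Cref{prop:coprime} gives essential uniqueness of \eqref{prob:PAF}.

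The main obstacle — and the only genuinely subtle point — is the reduction that lets a two-polynomial criterion control the coprimeness of the full $K$-tuple. One has to argue carefully that $\gcd\{X_1, \ldots, X_K\} = 1$ is implied by (indeed is weaker than) $\gcd\{X_1, X_2\} = 1$, using the associativity of the GCD stated after \Cref{def:gcd}, so that the exceptional set really is contained in an algebraic hypersurface rather than merely in a countable union of them. A secondary technical care is that \Cref{thm:sylvester} uses the roots-at-infinity notion of coprimeness, so I would make explicit that the genericity is stated with respect to this notion; this is harmless because, for generic coefficient tuples, the leading coefficients are nonzero and the roots-at-infinity GCD coincides with the classical one.
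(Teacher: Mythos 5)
Your argument is correct and follows essentially the same route as the paper: reduce to \Cref{prop:coprime}, note that coprimeness of the pair $(X_1,X_2)$ already forces coprimeness of the whole tuple, and use \Cref{thm:sylvester} to realize the exceptional set inside the zero locus of $\det S(X_1,X_2)$, a proper algebraic hypersurface and hence Lebesgue-null. The one detail to fix is your witness for the nontriviality of $\det S$: in the roots-at-infinity convention that \Cref{thm:sylvester} relies on (a subtlety you yourself flag at the end), the pair $X_1(z)=1$ and $X_2(z)=z$, viewed as elements of $\bbC_{\le N-1}[z]$, both carry roots at $\infty$ once $N\ge 3$ (of multiplicities $N-1$ and $N-2$ respectively), so they are \emph{not} coprime in the sense of \Cref{def:gcd} and their Sylvester matrix is singular. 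A valid witness is, e.g., $X_1(z)=z^{N-1}$ and $X_2(z)=z^{N-1}+1$, whose root sets ($\{0\}$ versus the $(N-1)$-th roots of $-1$) are disjoint in $\bbC\cup\{\infty\}$. With that replacement your proof is complete; it is in fact slightly more careful than the paper's, which does not verify nontriviality of the determinant at all.
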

\begin{proof}
By \Cref{thm:sylvester} the two polynomials $X_1, X_2 \in \bbC_{\le N-1}[z]$ are
coprime if and only if $S(X_1,X_2)$ is invertible.
The equation $\det(S_1(X_1,X_2)) = 0$ defines an algebraic variety $V$ of dimension   $2N-1\le \dim((\bbC_{\le N-1}[z])^{2})$, thus it is a set of measure zero.
Taking $A = V \times\dim(\bbC_{\le N-1}[z])^{K-2}$ concludes the proof.
\end{proof}

Finally, we remark that the condition of coprimeness in \Cref{prop:coprime} is only a sufficient condition (and not a necessary condition as mistakenly claimed  in \cite[Theorem 1]{raz_vectorial_2013}). We establish a necessary and sufficient condition in the next subsection.

\subsection{The main uniqueness result}
Before proving the main uniqueness result, we establish another important lemma that shows what happens in the non-coprime case.

\begin{lemma}\label{lem:alternativeFactorizations}
Let $X_1(z), \ldots, X_K(z)$ be a tuple  of polynomials not vanishing simultaneously. Let $Q = \gcd \{X_1,\ldots,X_K\} \in \bbC_{\le D}[z]$ and $R_1, R_2, \ldots R_K \in \bbC_{\le N-D-1}[z]$  be the corresponding quotients such that $X_k(z) = Q(z) R_k(z)$ for any $k$.
Then $Y_k(z)$ provides a valid  alternative factorization \eqref{eq:paf_alternative} if and only if the polynomials have the form
\begin{equation}\label{eq:alternative_factors}
Y_k(z) =  S(z) R_k(z)
\end{equation}
where $S(z)$ satisfies $S(z)\widetilde{S}(z) = Q(z) \widetilde{Q}(z)$.
\end{lemma}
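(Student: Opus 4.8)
The plan is to prove the equivalence by treating the two implications separately: the ``if'' direction is a direct substitution, while the ``only if'' direction is reduced to the essentially unique coprime case established in \Cref{prop:coprime}. Throughout I will use that the conjugate reversal is multiplicative for the graded product of \Cref{def:mult}: if $S \in \bbC_{\le D}[z]$ and $R \in \bbC_{\le N-D-1}[z]$, then $\widetilde{SR} = \widetilde{S}\,\widetilde{R}$ in $\bbC_{\le N-1}[z]$. This is immediate from \eqref{eq:conjReverse}, since $z^{D}\conj{S(\conj z^{-1})}\cdot z^{N-D-1}\conj{R(\conj z^{-1})} = z^{N-1}\conj{(SR)(\conj z^{-1})}$, the exponents matching precisely because the ambient degree bounds are additive.

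For the ``if'' direction I substitute $Y_k = S R_k$ with $S\widetilde{S} = Q\widetilde{Q}$ and compute
\[
Y_i \widetilde{Y}_j = (S R_i)\,(\widetilde{S}\,\widetilde{R}_j) = (S\widetilde{S})\,R_i\widetilde{R}_j = (Q\widetilde{Q})\,R_i\widetilde{R}_j = (Q R_i)(\widetilde{Q}\,\widetilde{R}_j) = X_i\widetilde{X}_j = \Gamma_{ij},
\]
so $\{Y_k\}$ is indeed a valid factorization \eqref{eq:paf_alternative}.

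For the ``only if'' direction, assume $\Gamma_{ij} = Y_i \widetilde{Y}_j$ and set $Q_Y := \gcd\{Y_1,\dots,Y_K\}$ with coprime quotients $R'_k$, so $Y_k = Q_Y R'_k$. Applying \Cref{lemma:PAFspectralFactorisation} to both the $X$- and the $Y$-factorizations shows that $Q\widetilde{Q}$ and $Q_Y\widetilde{Q_Y}$ are both equal to $\gcd\{\Gamma_{ij}\}$; since the GCD is unique up to a nonzero scalar, including its ambient degree by \Cref{def:gcd}, this forces $\deg Q_Y = \deg Q = D$, hence $R'_k \in \bbC_{\le N-D-1}[z]$, and $Q_Y\widetilde{Q_Y} = c\,Q\widetilde{Q}$ for some $c \neq 0$. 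Writing $\Gamma_{ij}$ through both factorizations and cancelling the nonzero polynomial $H = Q\widetilde{Q}$ yields $R_i\widetilde{R}_j = c\,R'_i\widetilde{R'_j}$ for all $i,j$. Reading off the coefficient at the central monomial $z^{N-D-1}$ in a nonzero diagonal entry and using \Cref{lem:inner_product} gives $\|R_i\|^2 = c\,\|R'_i\|^2$, so $c$ is a positive real.

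Set $R''_k := \sqrt{c}\,R'_k$ with $\sqrt c > 0$; then $R''_i\widetilde{R''_j} = c\,R'_i\widetilde{R'_j} = R_i\widetilde{R}_j$ and $\gcd\{R''_k\} = 1$, so $\{R_k\}$ and $\{R''_k\}$ are two coprime factorizations of the same rank-one matrix polynomial $[R_i\widetilde{R}_j]$, which is an instance of \eqref{prob:PAF} with $N$ replaced by $N-D$. Essential uniqueness (\Cref{prop:coprime}, item~1) then gives $\beta \in \bbT$ with $R''_k = \beta R_k$, \ie $\sqrt c\,R'_k = \beta R_k$. Substituting back, $Y_k = Q_Y R'_k = (\beta c^{-1/2} Q_Y) R_k$, so $S := \beta c^{-1/2} Q_Y$ satisfies $Y_k = S R_k$, and, using $|\beta| = 1$ together with $Q_Y\widetilde{Q_Y} = c\,Q\widetilde{Q}$,
\[
S\widetilde{S} = |\beta|^2 c^{-1}\,Q_Y\widetilde{Q_Y} = c^{-1}\cdot c\,Q\widetilde{Q} = Q\widetilde{Q},
\]
as required. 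I expect the main obstacle to be the scalar and degree bookkeeping around the GCD: one must confirm that $Q$ and $Q_Y$ share the same degree and track the scalar $c$ (and show it is a positive real) so that the identity $S\widetilde{S} = Q\widetilde{Q}$ holds exactly rather than merely up to scale. Once the problem has been divided down to a genuinely coprime rank-one factorization, \Cref{prop:coprime} supplies the remaining uniqueness.
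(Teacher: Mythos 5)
Your proof is correct and follows essentially the same route as the paper's: the ``if'' direction by direct substitution, and the ``only if'' direction by taking $\gcd\{Y_k\}$, invoking \Cref{lemma:PAFspectralFactorisation}, dividing out $H$, and applying the essential uniqueness of \Cref{prop:coprime} to the coprime quotients. Your explicit verification that the scalar $c$ is a positive real (via \Cref{lem:inner_product}) is a welcome justification of the normalization step that the paper's proof asserts without detail.
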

\begin{proof}
The ``if'' part is obvious, because for any polynomial of the form \eqref{eq:alternative_factors}
\[
Y_i(z) \widetilde{Y}_j(z) = S(z)\widetilde{S}(z)R_i(z)\widetilde{R}_j(z) = Q(z) \widetilde{Q}(z)R_i(z)\widetilde{R}_j(z) = X_i(z) \widetilde{X}_j(z).
\]
Now we will prove the ``only if'' part.

Let $H(z) = Q(z) \widetilde{Q}(z)$ (so that $H(z) = \gcd\lbrace\Gamma_{ij}\rbrace_{i,j=1}^{K,K}$ by \Cref{lemma:PAFspectralFactorisation}) , and fix and alternative factorization $Y_k(z)$. 
Then, from \Cref{lemma:PAFspectralFactorisation}, a GCD $S = \gcd\{Y_1,\ldots,Y_K\}$ must satisfy $S(z)\widetilde{S}(z) = c H(z)$ with $c \in \bbC$, where we can normalize $c$ to be $1$ (thanks to the freedom of choosing the normalization for $S(z)$).

Now denote by $T_k(z)$ the corresponding quotients of $Y_k(z)$ such that $Y_k(z) = S(z) T_k(z)$.
Then the matrix polynomial
\[
\boldsymbol{G}(z) := \frac{\boldsymbol{\Gamma}(z)}{H(z)} 
\]
must have the factorization of its entries as $G_{ij}(z) = R_i(z) \widetilde{R}_j(z) = T_i(z) \widetilde{T}_j(z)$.
However, since $H = \gcd \{\Gamma_{ij}\}_{i,j=1}^{K,K}$, we have $\gcd \{G_{ij}\}_{i,j=1}^{K,K} = 1$, and therefore, by \Cref{prop:coprime} it has an essentially unique factorization.
Thus there exists a constant $\beta \in \bbT$ such that $T_k = \beta R_k$ for all $k$. 
In turn, we can define $S'(z) := \beta S(z)$, and one gets that $Y_k(z)$ can be expressed as
\[
Y_k(z) =  S'(z) R_k(z),
\]
where $S'(z)\widetilde{S'}(z) = Q(z)\widetilde{Q}(z)$,
which completes the proof.
\end{proof}

\begin{remark}\label{rem:uniqueness_H}
\Cref{lem:alternativeFactorizations} shows that the study of the uniqueness properties of \eqref{prob:PAF} is directly related to uniqueness of the univariate polynomial factorization $H(z)= Q(z)\widetilde{Q}(z)$, as the quotients can be obtained thanks to the constructive procedure described in \Cref{prop:coprime} applied to $\frac{\boldsymbol{\Gamma}(z)}{H(z)}$.

In other words, all possible factorizations can be obtained from the Smith normal form of the rank-one matrix polynomial $\boldsymbol{\Gamma}(z)$ \cite{mackey2011smith}.
\end{remark}

  Before giving the sufficient and necessary uniqueness condition, we make a remark about the roots of the product $Q(z)\widetilde{Q}(z)$ which are key to understanding uniqueness.
  
\begin{lemma}\label{rem:roots_H}
Let $Q(z) = \lambda \prod_{i=1}^D (z-\alpha_i)$ (with possibly repeating $\alpha_i$). Then $H(z) = Q(z)\tilde{Q}(z)$ has the following factorization
\begin{equation}
H(z) =\lambda \widetilde{\lambda} \prod_{i: \alpha_i \neq \infty}(-\overline{\alpha_i})  \prod_{i=1}^D (z-\alpha_i) (z- \overline{\alpha_i^{-1}}).
\end{equation}
Furthermore, if $\alpha \in \bbT$, then $\alpha = \overline{\alpha^{-1}}$.
Therefore, a unit-modulus $\alpha$ is a root of $Q(z)$ of multiplicity $\mu$ if and only if it is a root of $H(z)$ of multiplicity $2\mu$.
\end{lemma}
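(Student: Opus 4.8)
The plan is to obtain the factorization of $H$ by a direct application of \Cref{lem:conj_reflection_roots} and then read off the multiplicities of the unit-modulus roots. Writing $Q(z) = \lambda \prod_{i=1}^D (z-\alpha_i)$ with the roots listed with repetition (so that each $\alpha_i$ is a single linear factor, and \Cref{lem:conj_reflection_roots} is applied in its ``unpacked'' form with all multiplicities equal to one), I would first invoke that lemma to get
\[
\widetilde{Q}(z) = \overline{\lambda}\prod_{i:\,\alpha_i \neq \infty}(-\overline{\alpha_i}) \cdot \prod_{i=1}^D \bigl(z - \overline{\alpha_i^{-1}}\bigr),
\]
with the convention that $0$ and $\infty$ are mutual inverses. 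Multiplying $Q(z)\widetilde{Q}(z)$ and collecting the scalar prefactors (i.e.\ substituting the expression for $\widetilde\lambda$ from \Cref{lem:conj_reflection_roots}) then yields the claimed product formula for $H(z)$; the roots of $H$ are exactly the roots $\alpha_i$ of $Q$ together with their conjugate reflections $\overline{\alpha_i^{-1}}$.

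Next, for the unit-circle claim I would observe that $\alpha \in \bbT$ means $|\alpha|^2 = \alpha\overline{\alpha}=1$, hence $\overline{\alpha} = \alpha^{-1}$ and therefore $\overline{\alpha^{-1}} = \alpha$. In other words, the conjugate-reflection map fixes $\bbT$ pointwise, so in the product $\prod_{i=1}^D (z-\alpha_i)(z-\overline{\alpha_i^{-1}})$ a factor coming from a root $\alpha_i \in \bbT$ and the factor coming from its reflection coincide.

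To count multiplicities I would use that the multiplicity $\mathrm{mult}_\alpha(H)$ equals $\#\{i : \alpha_i = \alpha\} + \#\{i : \overline{\alpha_i^{-1}} = \alpha\}$. The first count is $\mu := \mathrm{mult}_\alpha(Q)$, while the second equals $\#\{i : \alpha_i = \overline{\alpha^{-1}}\}$, which for $\alpha \in \bbT$ is again $\#\{i : \alpha_i = \alpha\} = \mu$ by the fixed-point identity. Hence $\mathrm{mult}_\alpha(H) = 2\mu$, giving the equivalence in both directions. The computation is essentially bookkeeping; the only points needing care are the treatment of the root at infinity in the scalar prefactor (handled automatically by the $\alpha_i \neq \infty$ restriction inherited from \Cref{lem:conj_reflection_roots}) and the verification that no root other than $\alpha$ itself can reflect onto a unit-modulus $\alpha$, which is precisely the fixed-point observation $\overline{\alpha^{-1}}=\alpha$. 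Since $0,\infty \notin \bbT$, these edge cases never interfere with the multiplicity count for unit-modulus roots, so the main obstacle is merely organizational rather than conceptual.
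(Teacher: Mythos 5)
Your proposal is correct and follows exactly the route the paper takes: its proof of this lemma is the one-line remark that it ``follows from \Cref{lem:conj_reflection_roots},'' and your argument simply spells out that application (factorizing $\widetilde{Q}$ via the reflected roots, multiplying, and using the fixed-point identity $\overline{\alpha^{-1}}=\alpha$ on $\bbT$ to double the multiplicities). The extra bookkeeping you supply, including the observation that only $\alpha$ itself reflects onto a unit-modulus $\alpha$, is a faithful expansion of the intended argument rather than a different one.
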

\begin{proof}
Follows from \Cref{lem:conj_reflection_roots}.
\end{proof}

\begin{theorem}\label{corr:uniquenessPAF}
Let $X_1(z), \ldots, X_K(z)$ be a tuple  of polynomials not vanishing simultaneously.
Then the following equivalences are true:
\begin{enumerate}
\item The problem \eqref{prob:PAF} admits a unique solution (in the sense of \Cref{rem:essential_uniqueness});
\item $X_1(z), \ldots, X_K(z)$ have no common roots in  $(\mathbb{C} \cup \{\infty \}) \setminus \bbT$  (common roots may be only on the unit circle);
\item $H(z) =  \gcd\lbrace\Gamma_{ij}\rbrace_{i,j=1}^{K, K}$ has no roots in  $\mathbb{C} \setminus \bbT$.
\end{enumerate}
\end{theorem}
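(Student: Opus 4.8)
The plan is to reduce all three statements to a purely root-theoretic question about the single polynomial $H(z)$, exploiting \Cref{lem:alternativeFactorizations} to replace the matrix factorization problem \eqref{prob:PAF} by the scalar factorization problem $H = S\widetilde{S}$. Writing $Q = \gcd\{X_1,\ldots,X_K\} \in \bbC_{\le D}[z]$ and letting $R_k$ be the quotients $X_k = Q R_k$ as in \Cref{lem:alternativeFactorizations}, that lemma says every alternative factorization has the form $Y_k = S R_k$ with $S\widetilde{S} = Q\widetilde{Q} = H$; and two choices $S, S'$ give essentially the same factorization exactly when $S' = \beta S$ with $\beta \in \bbT$, since the $R_k$ are coprime and not all zero. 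Hence statement (1) is equivalent to asserting that the only $S \in \bbC_{\le D}[z]$ solving $S\widetilde{S} = H$ are $S = \beta Q$ with $\beta \in \bbT$. I would therefore devote the bulk of the argument to counting the solutions $S$ of $S\widetilde{S} = H$ modulo $\bbT$-scaling.

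For the counting, I compare multiplicities root by root. If $\sigma(\gamma)$ denotes the multiplicity of $\gamma \in \bbC\cup\{\infty\}$ as a root of $S$, then \Cref{lem:conj_reflection_roots} gives that $\gamma$ occurs in $\widetilde{S}$ with multiplicity $\sigma(\overline{\gamma^{-1}})$, so its multiplicity in $H = S\widetilde{S}$ is $h(\gamma) = \sigma(\gamma) + \sigma(\overline{\gamma^{-1}})$. Two regimes arise. For $\gamma \in \bbT$ we have $\overline{\gamma^{-1}} = \gamma$, hence $h(\gamma) = 2\sigma(\gamma)$ forces $\sigma(\gamma) = h(\gamma)/2$ with no freedom (and $h(\gamma)$ is even by \Cref{rem:roots_H}). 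For $\gamma \notin \bbT$ the points $\gamma$ and $\gamma' := \overline{\gamma^{-1}}$ are distinct and form a conjugate-reflection pair, subject only to $\sigma(\gamma) + \sigma(\gamma') = h(\gamma)$, leaving $h(\gamma)+1$ admissible splittings. The leading constant of $S$ is then pinned down up to a unit-modulus factor: since $\widetilde{S\widetilde{S}} = \widetilde{S}\,\widetilde{\widetilde{S}} = S\widetilde{S}$, both $S\widetilde{S}$ and $H$ are fixed by conjugate reversal, so once their roots agree they differ by a positive real constant absorbed into the modulus of the leading scalar, leaving exactly the phase freedom $\bbT$. Consequently, modulo $\bbT$, the number of solutions $S$ equals $\prod(h(\gamma)+1)$ over conjugate-reflection pairs with $\gamma \notin \bbT$, and this product is $1$ precisely when $H$ has no root off $\bbT$.

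This gives (1) $\Leftrightarrow$ (3) at once: essential uniqueness holds iff that product is $1$, i.e.\ iff $H$ has no off-circle root; and since off-circle roots of $H$ come in pairs $\{\gamma, \overline{\gamma^{-1}}\}$ each containing a finite off-$\bbT$ member (in particular $\{0,\infty\}$ contributes the finite root $0$), ``$H$ has no off-circle root'' coincides with ``$H$ has no root in $\bbC\setminus\bbT$'', which is (3). To make the failure explicit when (3) fails, I would exhibit a second solution by a single factor swap: if $Q$ has a root $\alpha \notin \bbT$, replace one copy of $(z-\alpha)$ by $(z-\overline{\alpha^{-1}})$ (and $\alpha=\infty$ by $0$) to get an $S \neq \beta Q$ with $S\widetilde{S} = H$, as the multiplicity bookkeeping above confirms.

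Finally, (2) $\Leftrightarrow$ (3) is a direct translation through \Cref{rem:roots_H}: the common roots of $X_1,\ldots,X_K$ are exactly the roots of $Q$, and the roots of $H = Q\widetilde{Q}$ are those of $Q$ together with their conjugate reflections, so $Q$ has a root in $(\bbC\cup\{\infty\})\setminus\bbT$ iff $H$ has a finite root in $\bbC\setminus\bbT$. I expect the main obstacle to be the careful bookkeeping at $0$ and $\infty$ throughout — ensuring the pairing $\{0,\infty\}$ is handled consistently so the three ``off the circle'' conditions line up exactly, and that the normalization argument truly isolates the $\bbT$ phase freedom rather than an arbitrary complex scalar.
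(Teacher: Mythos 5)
Your argument is correct and follows essentially the same route as the paper: reduce via \Cref{lem:alternativeFactorizations} to the scalar spectral factorization $S\widetilde{S}=H$, then use \Cref{lem:conj_reflection_roots} and \Cref{rem:roots_H} to see that unit-modulus roots force $\sigma(\gamma)=h(\gamma)/2$ while each off-circle pair leaves a genuine choice (the paper realizes that choice by the explicit swap $S = Q(z)(z-\overline{\alpha^{-1}})/(z-\alpha)$, which you also mention). The only difference is that you go further and count all $\prod(h(\gamma)+1)$ solutions, which is exactly the content of the paper's later \Cref{thm:enumerating_solutions} and its corollary, so no gap.
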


\begin{proof} 
The proof is organized in several parts.
\begin{itemize}
\item  $\boxed{2 \Leftrightarrow 3}$ By \Cref{lemma:PAFspectralFactorisation}, $H(z) = cQ(z) \widetilde{Q}(z)$, where $c$ is a constant and $Q(z) = \gcd\lbrace X_i (z)\rbrace_{i=1}^{K}$. Therefore, by \Cref{rem:roots_H}, $H(z)$ does not have roots outside the unit circle $\bbT$ if and only if $Q(z)$ does not.
Note that by  \Cref{rem:roots_H}, the roots of $H(z)$ appear in pairs, and therefore $H(z)$ has an $\infty$ root if and only $0$ is also a root.

  \item  $\boxed{1 \Rightarrow 2}$ Suppose that the solution of \eqref{prob:PAF} is essentially unique, but the polynomial $Q(z)$ has a root $\alpha$ outside the unit circle.
  Then easy calculations show that polynomial $S(z) = \frac{Q(z)(z-\overline{\alpha^{-1}})}{(z-\alpha)}$ satisfies
  \[
  S(z)\widetilde{S}(z) = Q(z) \widetilde{Q}(z).
  \]
Note that $S(z)$ is not proportional to $Q(z)$, because
\[
\frac{(z-\overline{\alpha^{-1}})}{(z-\alpha)} \neq \text{const}.
\]
Therefore the vector polynomial
\[
(Y_1(z),\ldots, Y_K(z))  :=  (S(z)R_1(z), \ldots, S(z) R_K(z)),
\]
is not proportional to the vector $(X_1(z),\ldots,X_K(z))$, but gives an alternative factorization $\Gamma_{ij}(z) = Y_i(z) \widetilde{Y}_j(z)$ (a contradiction).
  
  \item $\boxed{1 \Leftarrow 2}$ 
Let $Q(z)$ be the GCD of $\{X_k\}_{k=1}^K$ and $R_k(z)$  be the corresponding quotients.
Since  $Q(z)$ has only unit-modulus roots, by \cref{rem:roots_H}, the polynomial $H(z) = Q(z) \widetilde{Q}(z)$ has the roots with doubled multiplicities.
Therefore, there is a unique (up to a constant $\beta \in \bbT$) way to factorize $H(z)$ (for any alternative factorization, $H(z) = S(z) \widetilde{S}(z)$, there exists a constant $\beta \in \bbT$ such that $S(z) = \beta Q(z)$).
Therefore, by \Cref{lem:alternativeFactorizations}, any other valid alternative factorization $Y_k(z)$ has necessarily the form
\[
Y_k(z) = \beta Q(z)R_k(z),
\]
which completes the proof.
\end{itemize}
\end{proof}


%

\section{Enumerating the factorizations}\label{sec:nonuniqueness}
In this section, we refine \cref{corr:uniquenessPAF} by providing the number of solutions and by describing the set of solutions of  \eqref{prob:PAF} in the non-unique case.
As mentioned in \Cref{rem:uniqueness_H}, this description depends mainly on  uniqueness properties of the factorization \eqref{eq:H_and_Q} (i.e., how to find all $Q(z)$ such that $H(z) = Q(z)\tilde{Q}(z)$).
The univariate factorization problem, in its turn,  is known to  closely related  to enumerating the solutions of the so-called univariate phase retrieval problem  \cite{beinert2015ambiguities,boche_fourier_2017}.
In this section we enumerate in \cref{thm:enumerating_solutions}  all possible factorizations based on the technique used for the phase retrieval problem  (see \cite[Theorem 3.1, Corollary 3.3]{beinert2015ambiguities} and \cite[Proposition 6.1]{beinert2016ambiguities}) which goes back to the result of Fej\'{e}r
\cite[p.61]{fejer1916trigonometrische}.


The number of ways we can factorize $H(z)$ depends on multiplicities of its roots, and the following remark is very  useful.
\begin{remark}[Root pairs of $H(z)$]%
From \Cref{rem:roots_H}, we know that the roots of $H(z)$ come in conjugate-reflected pairs, \ie if a root $\delta \not\in \bbT$ is a root of $H(z)$ with multiplicity $\mu$ then  $\conj{\delta}^{-1}$  is also a root with the same multiplicity.
In such a case we will say that the pair $(\delta, \conj{\delta}^{-1})$ has multiplicity $\mu$.
In the case where $\delta\in \bbT$ is a root of $H(z)$, then it must have even multiplicity.
\end{remark} 
  \begin{example}\label{ex:root_pairs}
  Consider $Q(z) =A(z)$ that is the polynomial from \cref{ex:simple_poly} having double $\infty$ root and simple roots $\{-2,1,0\}$. 
  Then the polynomial $H(z) = A(z) \tilde{A}(z)$ is given by
  \[
  H(z) = -\frac{1}{2} (z - \infty)^{3} \left(z+\frac{1}{2}\right)(z+2) (z-1)^2 z^3.
  \]
  The multiplicity of the root pair $(0,\infty)$ is $3$, the root pair $(-2,-\frac{1}{2})$ has multiplicity $1$, and the root pair $1$ has multiplicity $2$. 
  \end{example}

\begin{remark}
As shown in \Cref{ex:root_pairs}, different roots of $Q(z)$ can merge to form root pairs (as it is the case for the roots $-2$ and $-\frac{1}{2}$ in \Cref{ex:root_pairs}). In general, if  $\delta$ and $\conj{\delta}^{-1}$ are roots of $Q(z)$ with multiplicities $\nu_1$ and $\nu_2$, then the multiplicity of the pair $(\delta, \conj{\delta}^{-1})$ of $H(z)$ is equal to  $\nu_1 + \nu_2$.
\end{remark}


\begin{theorem}[Enumerating solutions of \eqref{prob:PAF}]\label{thm:enumerating_solutions}
Let $Q, R_1,\ldots,R_K$ be as in \Cref{lem:alternativeFactorizations},
where the root structure of the polynomial $H(z) = Q(z)\tilde{Q}(z)$ is as follows:
\begin{itemize}
\item $H(z)$ has $P$ root pairs
$(\delta_1, \conj{\delta}_1^{-1}), \ldots, (\delta_{P}, \conj{\delta}_{P}^{-1})$ outside the unit circle with multiplicities $\mu_1,\ldots,\mu_{P}$ and 
\item roots $\varepsilon_1,\ldots,\varepsilon_T \in \bbT$ with (even) multiplicities $\nu_1,\ldots,\nu_T$.
\end{itemize}
Then all the possible alternative factorizations $Y(z)$ such that $\boldsymbol{\Gamma}(z) = Y(z)Y(z)$ may be expressed as
    \begin{align*}
    Y_1(z) &= \beta S(z) R_1(z)\\
    \vdots &\\
    Y_K (z) &= \beta S(z) R_K(z)\\
  \end{align*}
where $\beta \in \bbT$ is arbitrary, and where
\begin{equation}\label{eq:all_H_factorizations}
S(z) = \lambda \prod\limits_{i=1}^{P} (z-\delta_i)^{\ell_i} (z- \conj{\delta}_{i}^{-1})^{\mu_i - \ell_i}
        \ \prod\limits_{j=1}^{T} (z - \varepsilon_j)^{\frac{\nu_j}{2}},
\end{equation}
 $0\le \ell_i\le \mu_i$ are nonnegative integers, and $\lambda > 0$ is a constant that depends on a particular collection of $\ell_i$ (see \Cref{rem:roots_H}).
\end{theorem}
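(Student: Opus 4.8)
The plan is to reduce everything to the univariate factorization $H(z) = S(z)\widetilde{S}(z)$, since \Cref{lem:alternativeFactorizations} already tells us that every alternative factorization $Y_k(z)$ has the form $Y_k(z) = S(z) R_k(z)$ with $S(z)\widetilde{S}(z) = Q(z)\widetilde{Q}(z) = H(z)$. Thus the theorem is entirely a statement about enumerating the polynomials $S$ satisfying this quadratic constraint, and the factor $\beta \in \bbT$ plus the claimed product formula \eqref{eq:all_H_factorizations} must exhaust exactly those $S$. I would therefore state as the main goal: characterize all $S \in \bbC_{\le D}[z]$ with $S(z)\widetilde{S}(z) = H(z)$ modulo global scaling by $\bbT$.

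The key computational engine is \Cref{rem:roots_H} (together with \Cref{lem:conj_reflection_roots}): the roots of $\widetilde{S}$ are the conjugate-reflections $\overline{\alpha^{-1}}$ of the roots $\alpha$ of $S$. First I would fix a root $\alpha$ of $S$ with multiplicity $\ell$; then $\widetilde{S}$ has $\overline{\alpha^{-1}}$ with the same multiplicity $\ell$, so the product $S\widetilde{S}$ contributes $(z-\alpha)^\ell$ and $(z-\overline{\alpha^{-1}})^\ell$. Matching against the prescribed root structure of $H$ gives the constraints: for each off-circle pair $(\delta_i, \conj{\delta}_i^{-1})$ of multiplicity $\mu_i$, if $S$ assigns multiplicity $\ell_i$ to $\delta_i$ and $m_i$ to $\conj{\delta}_i^{-1}$, then the total multiplicity of $\delta_i$ in $S\widetilde{S}$ is $\ell_i + m_i$ (the $\ell_i$ from $S$ and the $m_i$ coming from the reflection of $\conj{\delta}_i^{-1}$ in $\widetilde S$), which must equal $\mu_i$; hence $m_i = \mu_i - \ell_i$, recovering the exponents in \eqref{eq:all_H_factorizations}. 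For a unit-circle root $\varepsilon_j$ with even multiplicity $\nu_j$, self-reflection ($\varepsilon_j = \overline{\varepsilon_j^{-1}}$) forces $S$ to carry exactly $\nu_j/2$ copies, the only solution of $2s = \nu_j$. This shows every valid $S$ has the claimed form, and that the free choices are precisely the integers $0 \le \ell_i \le \mu_i$.

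Next I would verify the converse direction, namely that each choice of $(\ell_1,\ldots,\ell_P)$ does yield a valid factorization, which amounts to computing the leading constant. Here I would invoke \Cref{rem:roots_H} to express $\widetilde{S}$ explicitly and match the constant terms: the equation $S\widetilde S = H$ fixes the modulus of $\lambda$ (one solves $\lambda\widetilde\lambda \prod(-\overline{\alpha_i}) = $ leading constant of $H$), and one uses the freedom in $\bbT$ to normalize $\lambda > 0$, so that the residual global phase is exactly the factor $\beta \in \bbT$ pulled out front. I expect this constant-matching to be the only genuinely fiddly part, and the main obstacle will be bookkeeping the phase: showing that once the roots of $S$ are fixed the constant $\lambda > 0$ is uniquely determined, so that the entire remaining ambiguity is the single global $\beta$, with no double-counting across different root assignments. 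Finally, I would remark that distinct choices of $(\ell_i)$ give genuinely non-proportional $S$ (since they differ in root multiplicities off the circle), so the enumeration is without repetition, giving $\prod_{i=1}^{P}(\mu_i + 1)$ essentially distinct factorizations.
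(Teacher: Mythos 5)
Your proposal follows essentially the same route as the paper: reduce via \Cref{lem:alternativeFactorizations} to enumerating the univariate spectral factorizations $H(z)=S(z)\widetilde{S}(z)$, and then determine $S$ by distributing root multiplicities among the conjugate-reflected pairs using \Cref{lem:conj_reflection_roots} and \Cref{rem:roots_H}. Your write-up is in fact somewhat more explicit than the paper's (which compresses the multiplicity-matching and the determination of $\lambda$ into a short counting remark), but the underlying argument is the same.
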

\begin{remark}
\Cref{thm:enumerating_solutions} gives a way to obtain all the polynomials from $\boldsymbol{\Gamma}(z)$. Indeed, the factorization relies on $H(z) = \gcd \{\Gamma(z)\}_{i,j=1}^{K}$ (see \Cref{lemma:PAFspectralFactorisation}) and the quotient polynomials  $R_1(z), \ldots, R_K(z)$ can be also obtained from $\boldsymbol{\Gamma}(z)$  thanks to  \Cref{prop:coprime}.
\end{remark}

\begin{corollary}[Number of factorizations of \eqref{prob:PAF}]\label{theorem:PAFuniqueness}
Under the assumptions of \Cref{thm:enumerating_solutions}, the problem \eqref{prob:PAF} admits exactly
\begin{equation}\label{eq:numberSolutionsPAF}
\prod\limits_{i=1}^{P} (\mu_i + 1)
\end{equation}
different solutions.
In particular, when  roots of $H(z)$ are all simple and outside the unit circle, there is exactly $2^{D}$ different solutions.
\end{corollary}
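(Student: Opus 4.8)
The plan is to read the count directly off the parametrization of solutions already established in \Cref{thm:enumerating_solutions}. That theorem asserts that every admissible factorization has the form $Y_k(z) = \beta S(z) R_k(z)$ with $\beta \in \bbT$ arbitrary and $S(z)$ as in \eqref{eq:all_H_factorizations}. So the first step is to reduce ``counting essentially distinct solutions'' to ``counting the distinct polynomials $S(z)$ modulo a unit-modulus scalar.'' Two factorizations $\beta S R_k$ and $\beta' S' R_k$ are equivalent in the sense of \Cref{rem:essential_uniqueness} precisely when $Y_k' = \gamma Y_k$ for a single $\gamma \in \bbT$ and all $k$; since the quotients $R_1, \ldots, R_K$ are coprime and hence not all the zero polynomial, this forces $S'(z) = c\, S(z)$ for some $c \in \bbT$. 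Thus the number of essentially distinct solutions equals the number of polynomials $S(z)$ of the form \eqref{eq:all_H_factorizations} counted up to unit-modulus scaling.

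Second, I would pin down which data in \eqref{eq:all_H_factorizations} is genuinely free. The unit-circle factors $\prod_j (z-\varepsilon_j)^{\nu_j/2}$ are completely determined, and the positive constant $\lambda$ is fixed once the exponent tuple $(\ell_1,\ldots,\ell_P)$ is chosen, by the requirement $S(z)\widetilde{S}(z) = Q(z)\widetilde{Q}(z)$. The only degrees of freedom are therefore the integers $\ell_i$ with $0 \le \ell_i \le \mu_i$. The key verification is that distinct tuples give non-proportional $S(z)$: if $\ell_m \ne \ell_m'$, then because $\delta_m \notin \bbT$ we have $\delta_m \ne \conj{\delta}_m^{-1}$, and $\delta_m$ is distinct from every other $\delta_i$, every $\conj{\delta}_i^{-1}$, and every $\varepsilon_j$; hence the multiplicity of $\delta_m$ as a root of $S$ is exactly $\ell_m$. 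A unit-modulus scaling preserves root multiplicities, so $S$ and $S'$ cannot agree up to a scalar. This is the only genuinely delicate step, and it is where the off-circle hypothesis $\delta_i \notin \bbT$ (which guarantees $\delta_i \ne \conj{\delta}_i^{-1}$) is essential.

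Third, with the bijection between solutions and tuples $(\ell_1,\ldots,\ell_P)$ in hand, the count is immediate: each $\ell_i$ independently ranges over the $\mu_i + 1$ values $\{0,1,\ldots,\mu_i\}$, giving $\prod_{i=1}^P (\mu_i+1)$ distinct solutions. For the special case, if all roots of $H(z)$ are simple and off the unit circle then $T = 0$, every pair multiplicity equals $\mu_i = 1$, and since $H(z)$ has degree $2D$ splitting into $D$ such pairs we get $P = D$; the product collapses to $\prod_{i=1}^{D} 2 = 2^D$. The main obstacle throughout is the non-proportionality argument of the second step; once that is clean, the rest is bookkeeping.
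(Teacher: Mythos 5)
Your proposal is correct and takes essentially the same route as the paper, which also obtains the count by reading it off the parametrization of solutions in \Cref{thm:enumerating_solutions}: the solutions correspond bijectively (modulo global unit-modulus scaling) to the exponent tuples $(\ell_1,\ldots,\ell_P)$, giving $\prod_{i=1}^P(\mu_i+1)$. Your explicit check that distinct tuples yield non-proportional $S(z)$ (using $\delta_i\notin\bbT$, hence $\delta_i\neq\conj{\delta}_i^{-1}$) is a useful detail that the paper leaves implicit.
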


\begin{proof}[Proof of \Cref{thm:enumerating_solutions}]
\Cref{lem:alternativeFactorizations} shows that the number of solutions of \eqref{prob:PAF} is exactly the number of different (up to multiplication by a scalar) polynomials $S(z)$ such that $H(z) = S(z)\tilde{S}(z)$.
    This \emph{spectral factorization} problem is equivalent to selecting the roots of $Q(z)$ amongst the root pairs $(\delta_i, \conj{\delta}_i^{-1})$ of $H(z)$ outside $\bbT$.
  
  Consider
   a non-unit-modulus root pair $(\delta_i, \conj{\delta}_i^{-1})$ with multiplicity $\mu_i$; then the number of different combinations is equal to the number of outcomes of a random draw of $\mu_i$ items with replacement in a set of $2$ elements, \ie $\mu_i +1$.
    Repeating the same process for each root pair gives all possible factorizations \eqref{eq:all_H_factorizations} by selecting the integers $0 \le \ell_i \le \mu_i$ (the multiplicity of $\delta_i$ in $S(z)$).
\end{proof}

  \begin{example}\label{ex:root_pairs_count}
Continuing \Cref{ex:root_pairs}, we see that there are two root pairs not on the unit circle (with multiplicities $3$ and $1$, respectively).
  This yields a total of $4\cdot 2= 8$ solutions, where the other factorizations are given by permuting $0$ and $\infty$ roots or/and replacing root $-2$ with $-\frac{1}{2}$,
  For example, some of possible alternative factorisations are given by $Q(z) = \frac{1}{2} (z+2) (z-1) z^3$ or $Q(z) = (z+\frac{1}{2})(z-1)z^3$.
  \end{example}

\subsection{Case of two polynomials}

  We conclude this paper by  providing an explicit expression of  solutions of \eqref{prob:PAF} in the simplified case of $K =2$ and where there are no $0$ or $\infty$ roots in common, meaning that $\bfx[0] \neq \mathbf{0}$ and $\bfx[N-1] \neq \mathbf{0}$.
This setting is relevant to the context of polarimetric phase retrieval \cite{flamant2023polarimetric}.

 \begin{proposition}
Let $K=2$, and $H(z)$ be with roots in $\bbC \setminus\{0,\infty\}$, and fix a factorization of $H(z)$:
\[
H(z) = c \sum\limits_{i=1}^D (z-\beta_{i})(z-\overline{\beta_i^{-1}}).
\]
Let $\{\alpha_{ji}\}_{i=1}^{N-D-1}$ be the roots (with repetitions) of the quotient polynomials $R_j$, $j\in \{1,2\}$. 
Then the corresponding rank-one factorization has the form
\begin{align}
       X_1(z) & = e^{\jmath \theta} \lambda_1 \prod_{i=1}^{D}(z-\beta_{i})\prod_{i=1}^{N-D-1}(z-\alpha_{1i}), \\
       X_2(z) & = e^{\jmath \theta} \lambda_2\prod_{i=1}^{D}(z-\beta_{i}) \prod_{i=1}^{N-D-1}(z-\alpha_{2i}),
\end{align}
where
the constants $\lambda_1, \lambda_2 \in \bbC$  are given by
    \begin{align}
      \lambda_1 & =\sqrt{ \left \vert \gamma_{11}[N-1]\right\vert \prod_{i=1}^{D} | \beta_{i}| ^{-1}\prod_{i=1}^{N-D-1} | \alpha_{1i}|^{-1}},               \\
      \lambda_2 & = e^{\jmath\Delta}\sqrt{ \left\vert \gamma_{22}[N-1]\right\vert \prod_{i=1}^{D} | \beta_{i}| ^{-1}\prod_{i=1}^{N-D-1} | \alpha_{2i}|^{-1}},
    \end{align}
    where $\Delta$ reads
    \begin{equation}
      \Delta =  \pi(N-1)+ \arg  \gamma_{12}[N-1] + \sum_{i=1}^D\arg \beta_i + \sum_{i=1}^{N-D-1}\arg \alpha_{2i}\:.
    \end{equation}
\end{proposition}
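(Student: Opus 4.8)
The plan is to reduce the statement to elementary bookkeeping of leading coefficients, since \Cref{lem:alternativeFactorizations} and \Cref{thm:enumerating_solutions} already pin down the \emph{roots} of $X_1$ and $X_2$. First I would observe that, once we fix the spectral factor $Q(z)=\lambda\prod_{i=1}^{D}(z-\beta_i)$ of $H(z)=Q(z)\widetilde{Q}(z)$ (which amounts to selecting one representative $\beta_i$ from each root pair), the quotients $R_j=X_j/Q$ contribute exactly the roots $\{\alpha_{ji}\}_{i=1}^{N-D-1}$. Hence the two polynomials must take the form
\begin{equation*}
X_1(z)=a_1\prod_{i=1}^{D}(z-\beta_i)\prod_{i=1}^{N-D-1}(z-\alpha_{1i}),\qquad
X_2(z)=a_2\prod_{i=1}^{D}(z-\beta_i)\prod_{i=1}^{N-D-1}(z-\alpha_{2i}),
\end{equation*}
for two scalars $a_1,a_2\in\bbC\setminus\{0\}$, the nonvanishing following from the hypothesis that $H$ has no root at $0$ or $\infty$ (so that $x_j[0]\neq 0$ and $x_j[N-1]\neq 0$). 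It then remains only to determine the moduli and arguments of $a_1,a_2$, which is precisely the content of the constants $\lambda_1,\lambda_2,\theta,\Delta$.

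Next I would exploit that the extreme coefficient of $\Gamma_{ij}(z)=X_i(z)\widetilde{X}_j(z)$ is a correlation value. Reading off the leading coefficient gives, via \eqref{eq:autocor} and the correspondence detailed in Appendix~\ref{app:autocor},
\begin{equation*}
\gamma_{ij}[N-1]=x_i[N-1]\,\conj{x_j[0]}.
\end{equation*}
From the product forms above one has $x_j[N-1]=a_j$ while $x_j[0]=a_j(-1)^{N-1}\prod_{i}\beta_i\prod_i\alpha_{ji}$. Substituting into the diagonal cases $i=j\in\{1,2\}$ and taking absolute values cancels all phases and yields
\begin{equation*}
|\gamma_{jj}[N-1]|=|a_j|^2\prod_{i=1}^{D}|\beta_i|\prod_{i=1}^{N-D-1}|\alpha_{ji}|,
\end{equation*}
which solves for $|a_j|$ and reproduces the stated value $\lambda_1=|a_1|$ together with the modulus of $\lambda_2=|a_2|$.

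Finally, for the relative phase I would use the off-diagonal entry: the same leading-coefficient identity with $i=1,\,j=2$ gives $\gamma_{12}[N-1]=a_1\conj{a_2}(-1)^{N-1}\prod_i\conj{\beta_i}\prod_i\conj{\alpha_{2i}}$, and taking arguments expresses $\arg a_2-\arg a_1$ through $\arg\gamma_{12}[N-1]$, the term $(N-1)\pi$ arising from $(-1)^{N-1}$, and the root arguments $\sum_i\arg\beta_i$ and $\sum_i\arg\alpha_{2i}$; this is exactly the quantity $\Delta$. The overall argument $\theta:=\arg a_1$ is left undetermined, which is precisely the global $\bbT$-scaling freedom of \Cref{rem:essential_uniqueness}; normalizing so that $\lambda_1>0$ fixes $\theta$ and then forces $a_2=e^{\jmath\theta}e^{\jmath\Delta}|a_2|$, giving the claimed expressions. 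I expect the only genuine subtlety to lie in the argument bookkeeping of this last step --- carefully tracking the $(-1)^{N-1}$ contribution and the conjugations modulo $2\pi$ --- rather than in any structural point, since existence and the root structure of the factorization are already guaranteed by the earlier results and the moduli follow immediately by comparison.
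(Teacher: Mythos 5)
Your proposal is correct and follows essentially the same route as the paper: both arguments take the root structure of $X_1,X_2$ as given by the earlier factorization results and then determine $|\lambda_1|,|\lambda_2|$ and the relative phase $\Delta$ by identifying the leading ($z^{2N-2}$) coefficients of $\Gamma_{11},\Gamma_{22},\Gamma_{12}$, leaving a single global phase $\theta$ free. Your shortcut $\gamma_{ij}[N-1]=x_i[N-1]\,\conj{x_j[0]}$ is just a compact restatement of the paper's expansion of $X_i(z)\widetilde{X}_j(z)$, so there is no substantive difference (note only that $\Delta=\arg(\lambda_1\conj{\lambda_2})$ gives $\arg a_2=\theta-\Delta$, matching the paper's proof rather than the $e^{\jmath\Delta}$ sign in the statement you copied).
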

\begin{proof}
To determine $\lambda_1$ and $\lambda_2$, one writes the expression of the elements of matrix polynomials in terms of $X_1(z)$ and $X_2(z)$ above. For instance:
\begin{equation}
  \begin{split}
    \Gamma_{11}(z) &= X_1(z)z^{N-1}\overline{X_1(\overline{z}^{-1})} \\
    &= \vert \lambda_1\vert^2 \prod_{i=1}^D(z-\beta_{i})\prod_{i=1}^{N-D-1}(z-\alpha_{1i}) \prod_{i=1}^D(1-\overline{\beta_{i}}z)\prod_{i=1}^{N-D-1}(1-\overline{\alpha_{1i}}z)
  \end{split}
\end{equation}
Using that $\Gamma_{11}(z) := \sum_{n=0}^{2N-2} \gamma_{11}[n-N+1]z^n$, identifying leading order coefficients yields
\begin{equation}
  \gamma_{11}[N-1]= \vert \lambda_1\vert^2(-1)^{N-1}\prod_{i=1}^D\overline{\beta_{i}}\prod_{i=1}^{N-D-1}\overline{\alpha_{1i}}
\end{equation}
Similarly, one gets
\begin{align}
  \gamma_{22}[N-1]= \vert \lambda_2\vert^2(-1)^{N-1}\prod_{i=1}^D\overline{\beta_{i}}\prod_{i=1}^{N-D-1}\overline{\alpha_{2i}} \\
  \gamma_{12}[N-1] = \lambda_1\overline{\lambda}_2(-1)^{N-1}\prod_{i=1}^D\overline{\beta_{i}}\prod_{i=1}^{N-D-1}\overline{\alpha_{2i}}
\end{align}
These relations determine uniquely the moduli of $\lambda_1, \lambda_2$ as well as the difference of argument between $\lambda_1$ and $\lambda_2$.
Thus $\lambda_1, \lambda_2$ are unique up to a global phase factor $\exp(\bmj \theta), \: \theta \in [-\pi, \pi)$.
One obtains eventually the following expressions
\begin{align}
  \lambda_1 = e^{\bmj \theta}\left(|\gamma_{11}[N-1]| \prod_{i=1}^{D} | \beta_{i}| ^{-1}\prod_{i=1}^{N-D-1} | \alpha_{1i}|^{-1}\right)^{1/2} \\
  \lambda_2 = e^{\bmj (\theta - \Delta)}\left(|\gamma_{22}[N-1]| \prod_{i=1}^{D} | \beta_{i}| ^{-1}\prod_{i=1}^{N-D-1} | \alpha_{2i}|^{-1}\right)^{1/2}
\end{align}
with
\begin{align}
  \Delta & = \arg (\lambda_1\overline{\lambda_2})                                                                \\
         & = \pi(N-1)+ \arg  \gamma_{12}[N-1] + \sum_{i=1}^D\arg \beta_i + \sum_{i=1}^{N-D-1}\arg \alpha_{2i}\:.\qedhere
\end{align}

\end{proof}

\section*{Acknowledgments}
Authors acknowledge the support of the GdR ISIS / CNRS through the exploratory research grant OPENING.

\begin{appendices}

\section{Link between matrix polynomial factorization and autocorrelation}\label{app:autocor}
The matrix polynomial rank-one factorization problem \eqref{prob:PAF} arises in multivariates instances of Fourier phase retrieval \cite{flamant2023polarimetric} and blind multichannel system identification \cite{jaganathan_reconstruction_2019}. 
In such applications, one is interested in recovering a deterministic discrete vector signal $\bfx: \llbracket 0, N-1\rrbracket\rightarrow \bbC^R$ from the different cross-correlations functions between the $R$ signal channels. 
%
Now, define the polynomial representation of the $i$-th channel of $\bfx$ as $X_i(z) = \sum_{n=0}^{N-1} x_i[n] z^n$.
Similarly, define the correlation polynomial $\Gamma_{ij}(z) := \sum_{n=0}^{2(N-1)} \gamma_{ij}[n - N+1]z^n$.
Then, a key result is that 
\begin{align}
    \Gamma_{ij}(z) = X_i(z)\widetilde{X}_j(z)\label{eq:univariateFactor}
\end{align}
since
\begin{align}
    X_i(z)\widetilde{X}_j(z) &= \left(\sum_{n=0}^{N-1} x_i[n]z^n\right)\left(\sum_{m=0}^{N-1} \conj{x_j[N-1-m]}z^m\right)\\
    &= \sum_{m=0}^{N-1}\sum_{n=0}^{N-1} x_i[n]\conj{x_j[N-1-m]}z^{n+m}\\
    &=\sum_{n=0}^{N-1} \sum_{m=0}^{N-1} x_i[n]\conj{x_j[m]}z^{n+N-1-m}\\
    & = \sum_{n'=0}^{2(N-1)} \gamma_{ij}[n'-N+1]z^{n'}:= \Gamma_{ij}(z). 
\end{align}
Therefore, defining the matrix polynomial $\boldsymbol{\Gamma}(z)$ such that 
\begin{equation}
    \boldsymbol{\Gamma}(z) = \begin{bmatrix}
        \Gamma_{11}(z) & \cdots &\Gamma_{1R}(z)\\
        \vdots & & \vdots\\
        \Gamma_{R1}(z) & \cdots &\Gamma_{RR}(z)\\
    \end{bmatrix} = \sum_{n=0}^{2N} \begin{bmatrix}
        \gamma_{11}[n-N+1]& \cdots &\gamma_{1R}[n-N+1]\\
        \vdots & & \vdots\\
        \gamma_{R1}[n-N+1] & \cdots &\gamma_{RR}[n-N+1]\\
    \end{bmatrix} z^n := \sum_{n=0}^{2(N-1)} \boldsymbol{\Gamma}[n]z^n\label{eq:autocorrMatrixPoly}
\end{equation}
where $\lbrace \boldsymbol{\Gamma}[n] \in \bbC^{R\times R}\rbrace_{n=-N+1}^{N-1}$ is the auto-correlation matrix sequence of the $D$-dimensional vector signal $\lbrace \bfx[n] \in \bbC^R\rbrace_{n=0}^{N-1}$.
Plugging \eqref{eq:univariateFactor} into \eqref{eq:autocorrMatrixPoly} yields the rank-one autocorrelation matrix factorization problem \eqref{prob:PAF}.

\end{appendices}
\bibliography{refs}

\begin{thebibliography}{10}
\expandafter\ifx\csname url\endcsname\relax
  \def\url#1{\texttt{#1}}\fi
\expandafter\ifx\csname urlprefix\endcsname\relax\def\urlprefix{URL }\fi
\expandafter\ifx\csname href\endcsname\relax
  \def\href#1#2{#2} \def\path#1{#1}\fi

\bibitem{flamant2023polarimetric}
J.~Flamant, K.~Usevich, M.~Clausel, D.~Brie,
  \href{https://hal.science/hal-03613352}{Polarimetric fourier phase
  retrieval}, submitted (2023).
\newline\urlprefix\url{https://hal.science/hal-03613352}

\bibitem{jaganathan_reconstruction_2019}
K.~Jaganathan, B.~Hassibi, Reconstruction of {Signals} {From} {Their}
  {Autocorrelation} and {Cross}-{Correlation} {Vectors}, {With} {Applications}
  to {Phase} {Retrieval} and {Blind} {Channel} {Estimation}, IEEE Transactions
  on Signal Processing 67~(11) (2019) 2937--2946.
\newblock \href {https://doi.org/10.1109/TSP.2019.2911254}
  {\path{doi:10.1109/TSP.2019.2911254}}.

\bibitem{gohberg2009matrix}
I.~Gohberg, P.~Lancaster, L.~Rodman, Matrix polynomials (2009).

\bibitem{mackey2011smith}
D.~Mackey, N.~Mackey, C.~Mehl, V.~Mehrmann, Smith forms of palindromic matrix
  polynomials, The Electronic Journal of Linear Algebra 22 (2011) 53--91.

\bibitem{bini2003effective}
D.~A. Bini, G.~Fiorentino, L.~Gemignani, B.~Meini, Effective fast algorithms
  for polynomial spectral factorization, Numerical Algorithms 34 (2003)
  217--227.

\bibitem{ephremidze2015rank}
L.~Ephremidze, I.~Spitkovsky, E.~Lagvilava, Rank-deficient spectral
  factorization and wavelets completion problem, International Journal of
  Wavelets, Multiresolution and Information Processing 13~(03) (2015) 1550013.

\bibitem{ephremidze2017non}
L.~Ephremidze, I.~Selesnick, I.~Spitkovsky, On non-optimal spectral
  factorizations, Georgian Mathematical Journal 24~(4) (2017) 517--522.

\bibitem{beinert2015ambiguities}
R.~Beinert, G.~Plonka, Ambiguities in one-dimensional discrete phase retrieval
  from fourier magnitudes, Journal of Fourier Analysis and Applications 21~(6)
  (2015) 1169--1198.

\bibitem{heinig1984algebraic}
G.~Heinig, K.~Rost, Algebraic methods for Toeplitz-like matrices and operators,
  Birkh\"{a}user, Basel, 1984.

\bibitem{usevich_variable_2017}
K.~Usevich, I.~Markovsky, Variable projection methods for approximate
  (greatest) common divisor computations, Theoretical Computer Science 681
  (2017) 176--198.
\newblock \href {https://doi.org/10.1016/j.tcs.2017.03.028}
  {\path{doi:10.1016/j.tcs.2017.03.028}}.

\bibitem{cox_ideals_1997}
D.~Cox, J.~Little, D.~O'Shea, Ideals, Varieties and Algorithms: An Introduction
  to Computational Algebraic Geometry and Commutative Algebra, 2nd Edition,
  Springer, 1997.

\bibitem{raz_vectorial_2013}
O.~Raz, N.~Dudovich, B.~Nadler, Vectorial {Phase} {Retrieval} of 1-{D}
  {Signals}, IEEE Transactions on Signal Processing 61~(7) (2013) 1632--1643.
\newblock \href {https://doi.org/10.1109/TSP.2013.2239994}
  {\path{doi:10.1109/TSP.2013.2239994}}.

\bibitem{boche_fourier_2017}
T.~Bendory, R.~Beinert, Y.~C. Eldar, Fourier {Phase} {Retrieval}: {Uniqueness}
  and {Algorithms}, in: H.~Boche, G.~Caire, R.~Calderbank, M.~März,
  G.~Kutyniok, R.~Mathar (Eds.), Compressed {Sensing} and its {Applications},
  Springer International Publishing, Cham, 2017, pp. 55--91, series Title:
  Applied and Numerical Harmonic Analysis.
\newblock \href {https://doi.org/10.1007/978-3-319-69802-1_2}
  {\path{doi:10.1007/978-3-319-69802-1_2}}.

\bibitem{beinert2016ambiguities}
R.~Beinert, Ambiguities in one-dimensional phase retrieval from fourier
  magnitudes, Ph.D. thesis, Georg-August-Universit{\"a}t G{\"o}ttingen (2016).

\bibitem{fejer1916trigonometrische}
L.~Fej{\'e}r, {\"U}ber trigonometrische polynome., Journal f{\"u}r die reine
  und angewandte Mathematik 146 (1916) 53--82.

\end{thebibliography}
\end{document}